\newtheorem{thm}{Theorem}[section]
\newtheorem{prop}[thm]{Proposition}
\newtheorem{defn}[thm]{Definition}
\newtheorem*{definition*}         {Definition}
\newtheorem{lemma}[thm]{Lemma}
\newtheorem{cor}[thm]{Corollary}
\theoremstyle{remark}
\newcommand*{\Q}{\mathbb{Q}}
\newcommand*{\HH}{\mathbb{H}}
\newcommand*{\Z}{\mathbb{Z}}
\newcommand*{\G}{\mathbf{G}}
\newcommand*{\MT}{\mathbf{MT}}
\newcommand*{\bTheta}{\mathbf{\Theta}}
\newcommand*{\M}{\mathbf{M}}
\newcommand*{\R}{\mathbb{R}}
\newcommand*{\C}{\mathbb{C}}
\newcommand*{\g}{\frak{g}}
\newcommand*{\h}{\frak{h}}
\newcommand*{\FF}{\mathcal{F}}
\newcommand*{\mult}{\textrm{mult}}
\newcommand*{\ra}{\rightarrow}
\def\GL{{\rm GL}}
\def\BAut{{\bf Aut}}
\renewcommand{\d}{\partial}
\newcommand{\ddb}{\partial\bar{\partial}}
\renewcommand{\phi}{\varphi}
\def\codim{\operatorname{codim}}
\def\GL{\operatorname{GL}}
\def\tr{\operatorname{tr}}
\def\mult{\operatorname{mult}}
\def\vol{\operatorname{vol}}
\def\Imag{\operatorname{Im}}
\def\Real{\operatorname{Re}}
\def\ad{\operatorname{ad}}
\def\trans{\operatorname{trans}}
\def\Imag{\operatorname{Im}}
\def\pr{\operatorname{pr}}
\newcommand{\odd}{\mathrm{odd}}
\newcommand{\horiz}{\mathrm{horiz}}
\renewcommand{\bar}[1]{\overline{#1}}
\title[The Hodge-theoretic Ax--Schanuel conjecture]{The Ax--Schanuel conjecture for variations of Hodge structures} 
 \author[B. Bakker]{Benjamin Bakker}
\address{\noindent B. Bakker:  Dept. of Mathematics, University of Georgia, Athens, USA.}
\email{bakker@math.uga.edu}
\author[J. Tsimerman]{Jacob Tsimerman}
\address{\noindent J. Tsimerman:  Dept. of Mathematics, University of Toronto, Toronto, Canada.
}
\email{jacobt@math.toronto.edu}
\begin{document}
\begin{abstract}We extend the Ax--Schanuel theorem recently proven for Shimura varieties by Mok--Pila--Tsimerman to all varieties supporting a pure polarized integral variation of Hodge structures.  The essential new ingredient is a volume bound on Griffiths transverse subvarieties of period domains.\end{abstract}
\maketitle

\section{Introduction}

\subsection{History}

Motivated by arithmetic considerations, there has recently been much work in the setting of functional transcendence, and specifically on generalizations of the famous Ax--Schanuel theorem on
the exponential function to the context of hyperbolic uniformizations. Indeed, the strategy of Pila and Zannier for proving the Andr\'e--Oort conjecture is reliant on a functional transcendence result dubbed the `Ax-Lindemann theorem' by Pila.  The approach originates in the celebrated paper \cite{P}, where Pila used his counting theorem with Wilkie to establish the result in the case of the Shimura variety $X(1)^n$, for $ n\geq 1$. 

The Ax-Lindemann theorem 
was finally established in full generality for Shimura varieties in \cite{KUY} by Klingler, Ullmo, and Yafaev, and for mixed Shimura varieties by Gao \cite{Gao}. Motivated by an analogous (though much more difficult to carry out) approach to the more general Zilber--Pink conjectures, Mok, Pila, and the second author recently proved the full Ax--Schanuel conjecture for general Shimura varieties \cite{MPT}.
In this paper we prove the Ax--Schanuel conjecture in the more general setting of variations of (pure) Hodge structures (formulated recently by Klingler \cite[Conjecture 7.5]{klingler}). This is motivated largely by a recent
approach of Lawrence--Venkatesh to establishing arithmetic Shafarevich-like theorems for large classes of varieties, which seems to require the theorem we prove to work in full generality.

\subsection{Statement of Results}

Let $\mathbf{S}$ be the Deligne torus $\textrm{Res}_{\C/\R}\mathbb{G}_m$. Given a pure polarized Hodge structure $h:\mathbf{S}\to \BAut(H_\Z,Q_\Z)$, the Mumford-Tate group $\MT_h\subset\BAut(H_\Z,Q_\Z)$ is the $\Q$-Zariski closure of $h(\mathbf{S})$.  The associated Mumford--Tate domain $D(\MT_h)$ is the $\MT_h(\R)$-orbit of $h$ in the full period domain of polarized Hodge structures on $(H_\Z,Q_\Z)$.  By a \emph{weak Mumford--Tate domain} $D(\M)$ we mean the $\M(\R)$-orbit of $h$ for some normal $\Q$-algebraic subgroup $\M$ of $\MT_h$.

Let $X$ be a smooth algebraic variety over $\C$ of dimension $n$ supporting a pure polarized integral variation of Hodge structures $\mathscr{H}_\Z$.  Let $\MT_{\mathscr{H}_\Z}$ be the generic Mumford--Tate group, and let $\Gamma\subset \MT_{\mathscr{H}_\Z}(\Z)$ be the image of the monodromy representation $\pi_1(X)\to \MT_{\mathscr{H}_\Z}(\Z)$ after possibly passing to a finite cover. Let $\G$ be the identity component of the $\Q$-Zariski closure of $\Gamma$.  Let $D=D(\G)$ be the associated weak Mumford--Tate domain and $\phi: X\to \Gamma\backslash D$ the period map of $\mathscr{H}_\Z$.  The compact dual $\check D$ of $D$ is a projective variety containing $D$ as an open set in the archimedean topology.  

Consider the fiber product
\[
\xymatrix{
X\times D&W\ar@{}[l]|{\supset\hspace{-.7em}}\ar[r]^{\tilde \phi}\ar[d]&D\ar[d]^\pi\\
&X\ar[r]_\phi&\Gamma\backslash D.
}
\]  
In this situation, for any weak Mumford--Tate subdomain $D'=D(\M')\subset D$ such that $\Gamma\cap \M'(\Q)$ is $\Q$-Zariski dense, $\phi^{-1}\pi(D')$ is an algebraic subvariety of $X$ by a result of Cattani--Deligne--Kaplan \cite{alghodge}, and we refer to such subvarieties as \emph{weak Mumford--Tate subvarieties} of $X$.  
 
\begin{thm}[Ax--Schanuel for variations of Hodge structures]\label{main}In the above setup, let $V\subset X\times \check D$ be an algebraic subvariety, and let $U$ be an irreducible analytic component of $V\cap W$ such that
\[\codim_{X\times \check D}(U)<\codim_{X\times\check D}(V)+\codim_{X\times\check D}(W).\]
Then the projection of $U$ to $X$ is contained in a proper weak Mumford--Tate subvariety.
\end{thm}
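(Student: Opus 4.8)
The plan is to adapt the o-minimality--plus--monodromy strategy used by Mok--Pila--Tsimerman \cite{MPT} for Shimura varieties, the one genuinely new input being a lower bound on the Hodge volume of Griffiths transverse subvarieties of $D$. First I would fix a semialgebraic fundamental set $\FF\subset D$ for the $\Gamma$-action, so that $\Gamma\cdot\FF=D$ and only finitely many $\gamma\in\Gamma$ meet a given translate of $\FF$; by the definability theorem for period maps of Bakker--Klingler--Tsimerman the slice $W_\FF:=W\cap(X\times\FF)$ is definable in $\R_{\mathrm{an},\exp}$, and since $V$ is algebraic so is $V\cap W_\FF$. Replacing $V$ by the Zariski closure of $U$ in $X\times\check D$ and passing to components I may assume $V$ irreducible with $\bar U=V$. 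Then, by a reduction along the lines of \cite{MPT} --- restrict $\mathscr{H}_\Z$ to a smooth model of the Zariski closure $Z$ of the projection $p(U)$ of $U$ to $X$ (with $p\colon W\to X$), and use that a weak Mumford--Tate subvariety of $Z$ for the restricted variation is again a weak Mumford--Tate subvariety of $X$, a transitivity property following from \cite{alghodge} --- it suffices to treat the case in which $p(U)$ is Zariski-dense in $X$ and to derive a contradiction. (If $\tilde\phi(U)$ were a point the variation would be isotrivial and the theorem vacuous, so $\tilde\phi(U)$ is a positive-dimensional Griffiths transverse subvariety of $D$.)

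Next I would run the counting step. Since $W$ is $\Gamma$-invariant, $\gamma U$ is an irreducible analytic component of $(\gamma V)\cap W$ for every $\gamma\in\Gamma$; hence the definable set
\[\bTheta:=\{\,g\in\G(\R):(gV)\cap W_\FF\text{ has an analytic component of dimension }\ge\dim U\,\}\]
contains, up to boundary corrections, every $\gamma^{-1}$ for which $\gamma\FF$ meets $\tilde\phi(U)$. The number of such $\gamma$ with $\|\gamma\|\le T$ is at least the Hodge volume of the part of $\tilde\phi(U)$ within distance $\sim\log T$ of a basepoint, divided by the uniformly bounded volume of a single translate of $\FF$. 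This is exactly where the volume bound is used: a positive-dimensional Griffiths transverse subvariety of a period domain has, in the Hodge metric, volume growth at least that of a complex-hyperbolic ball of the same dimension, so the count is $\gg T^{\delta}$ for a fixed $\delta>0$. Applying the Pila--Wilkie theorem to $\bTheta$ and the integral points coming from $\Gamma\subset\G(\Z)$ then produces a positive-dimensional semialgebraic subset of $\bTheta$.

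To finish I would follow \cite{MPT}: let $N\subseteq\G$ be the positive-dimensional algebraic subgroup generated by this semialgebraic set; using the minimality of $U$ one shows that a one-parameter family in $\bTheta$ forces $\tilde\phi(U)$ to be a union of $N(\R)^+$-orbits, and hence that $p(U)$ lies in $\phi^{-1}\pi(D(N'))$ for a weak Mumford--Tate subdomain $D(N')$ obtained from $N$ by passing to a normal subgroup of a Mumford--Tate group (invoking the structure theory of period domains together with \cite{alghodge}). Since $\tilde\phi(U)$ is Griffiths transverse and $\dim N\ge1$, the subdomain $D(N')$ is proper in $D$ unless $N(\R)^+$ acts transitively on $D$, which would give $\dim U\ge\dim\tilde\phi(U)=\dim D$ and is excluded by the minimality of $U$. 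Thus $p(U)$ lies in a proper weak Mumford--Tate subvariety, contradicting Zariski-density; unwinding the reduction, this establishes the theorem.

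The hard part is the volume bound invoked above. For Shimura varieties $D$ is a bounded symmetric domain, negatively curved in every direction, and every subvariety of $D$ is automatically Griffiths transverse, so classical volume estimates do the job; a general period domain is \emph{not} negatively curved --- its Hodge metric has curvature of mixed sign --- and a Griffiths transverse subvariety has dimension far below $\dim D$, so a priori nothing prevents it from being volume-degenerate. The approach to the bound is to use the Griffiths--Schmid curvature calculation: the holomorphic sectional (in fact bisectional) curvature of the Hodge metric restricted to horizontal tangent directions is bounded above by a negative constant, so by an Ahlfors--Schwarz/Bishop--Gromov comparison a Griffiths transverse subvariety is at least as voluminous as a ball of the same complex dimension in complex hyperbolic space. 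Making this uniform over all such subvarieties, and controlling the non-compact directions where $\tilde\phi(U)$ limits onto the boundary of $D$ --- via the several-variable $\SL_2$-orbit theorem and the precise asymptotics of the Hodge metric there --- is the technical core of the argument.
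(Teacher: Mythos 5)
Your outline reproduces the \cite{MPT} strategy at a high level -- definable fundamental domain, Pila--Wilkie, extracting an algebraic subgroup -- and that is indeed the strategy of the paper. But the two places where you fill in details differently are precisely the places where the gaps lie.

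The serious gap is the volume bound. You propose to derive it from a negative upper bound on holomorphic sectional (or bisectional) curvature in horizontal directions, via an ``Ahlfors--Schwarz/Bishop--Gromov comparison.'' This does not work for two reasons. First, Ahlfors--Schwarz and its generalizations give \emph{upper} bounds on pullback metrics under holomorphic maps into negatively curved targets, hence upper bounds on volume -- the wrong direction. Second, even if you argue intrinsically via the Gauss equation that a Griffiths transverse $Z$ has negative holomorphic sectional curvature, the quantity you need to control is $\vol(Z\cap B_x(R))$ for an \emph{extrinsic} ball $B_x(R)\subset D$, which bears no a priori relation to the volume of intrinsic geodesic balls in $Z$, and moreover you need the bound $\gg e^{\beta R}\,\mult_x Z$ with the multiplicity factor. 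None of these is delivered by a curvature comparison. What the paper does (following the spirit of Hwang--To \cite{hwangto1}) is construct an explicit function $\phi_0(x)=\log h_x(v_0)$ with $v_0$ a generator of $\det\h_{x_0}^+$, show that $i\ddb\phi_0$ is positive in Griffiths transverse directions (Lemma \ref{nonzero}), establish the crucial pointwise inequality $|\d\phi_0|^2 = O_{\mathrm{trans}}(i\ddb\phi_0)$ (Proposition \ref{formbounds}, proved via the combinatorial Cauchy--Schwarz estimate Lemma \ref{CS}), and then run a Stokes' theorem / plurisubharmonicity argument to deduce that $e^{-\beta r}\vol^{\phi_0}(Z\cap B^{\phi_0}(r))$ is nondecreasing. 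The multiplicity factor comes from Federer's inequality near the center, and the comparison $\phi_0\asymp d_0^{\mathrm{horiz}}$ (Proposition \ref{comparison}) converts this to the statement about metric balls. This argument is the real technical novelty; your proposal replaces it with a sketch of a different argument that would not go through.

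The second issue, less fatal but substantive, is that the endgame of your main proof is too vague to verify. The paper does not simply extract a semialgebraic curve and claim that $\tilde\phi(U)$ becomes a union of $N(\R)^+$-orbits; rather it works with the Hilbert scheme $M$ of deformations of $V$, forms the family $\mathcal{V}_X$ and the locus $T_0$ where the intersection with $W$ has excess dimension, projects to $X$ and invokes definable Chow, and then runs a triple induction (first minimizing $\dim X$, then maximizing $\codim V+\codim W-\codim U$, then maximizing $\dim U$) to derive a contradiction from the two lemmas: that the Zariski closure $\bTheta$ of the stabilizer of a very general fiber is normal in $\G$ and must be trivial, and yet the Pila--Wilkie step forces $V$ to be stabilized by an infinite subgroup of $\Gamma$. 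The case analysis at the end (whether $U\subset V_c$ along the semialgebraic curve, and whether $V_c$ is constant) is what makes the induction close, and your outline does not address it. Also, your reduction ``restrict to the Zariski closure of $p(U)$'' corresponds to the first step of the induction, but the splitting argument showing $\bTheta$ is trivial -- which uses the structure theorem of \cite{GGK} to split the period map and then applies the inductive hypothesis to the factor -- is absent. Finally, one small point: the paper builds the definable fundamental set from Schmid's nilpotent orbit theorem directly rather than citing a black-box definability theorem, precisely because no Siegel-set coordinates are available in the general Hodge-theoretic setting; this is one of the ``serious complications'' the introduction flags.
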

The theorem for example implies that the (analytic) locus in $X$ where the periods satisfy a given set of algebraic relations must be of the expected codimension unless there is a reduction in the generic Mumford--Tate group.   See \cite{klingler} for some related discussions.

\subsection{Outline of the proof}

We follow closely the proof in \cite{MPT}. There are two serious complications that have to be addressed, which are as follows:

First, we need to find a suitable fundamental domain in $D$ for the image of $X$ in $\Gamma\backslash D$. This domain has to be definable in the o-minimal structure $\R_{an,exp}$, and have 
certain growth properties. In the Shimura case, this is done by using a Siegel set. In our current setup this seems more difficult, due to the absence of toroidal co-ordinates. Instead, we use
Schmid's theory of degenerations of Hodge structures to define our fundamental domain, which also provides a new approach in the setting of Shimura varieties. For more details on this,
see \S\ref{definable}.

Second, the proof of Theorem \ref{main} requires a volume bound on Griffiths transverse\footnote{It is essential to restrict to Griffiths transverse subvarieties, as the general statement is false since, for example, $D$ contains compact subvarieties.} subvarieties $X\subset D$ analogous to those proven by Hwang--To for hermitian symmetric domains \cite{hwangto1}. We prove this in \S\ref{volume} and the result is as follows:

\begin{thm}\label{volumebound}There are constants $\beta,\rho>0$ (only depending on $D$) such that for any $R>\rho$, any $x\in D$, and any positive-dimensional Griffiths transverse global analytic subvariety $Z\subset B_x(R)\subset D$,
\[\vol(Z)\gg e^{\beta R}\mult_xZ\]
where $B_x(R)$ is the radius $R$ ball centered at $x$ and $\vol(Z)$ the volume with respect to the natural left-invariant metric on $D$.
\end{thm}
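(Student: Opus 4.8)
The plan is to adapt the Hwang--To argument for Hermitian symmetric domains to the period domain $D$, where the obstruction is that $D$ is not Kähler-symmetric and the Griffiths transverse directions only see part of the curvature. The strategy is to produce a plurisubharmonic exhaustion-type function on $D$ whose growth along geodesic balls is exponential and whose complex Hessian controls the volume form of Griffiths transverse subvarieties, and then apply a Stokes/monotonicity argument comparing $\vol(Z)$ to $\mult_x Z$ via the integral of a suitable closed positive $(1,1)$-form over $Z$.

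First I would set up coordinates: fix $x\in D$, and use the Cartan--Hadamard structure coming from the left-invariant metric so that $B_x(R)$ is diffeomorphic to a Euclidean ball and $r(y) = \operatorname{dist}(x,y)$ is smooth away from $x$. The key local input is a lower bound on the holomorphic sectional curvature of $D$ \emph{restricted to Griffiths transverse planes}: although $D$ has mixed curvature in general, the horizontal tangent bundle carries a negatively curved sub-structure (this is classical for period domains, going back to Griffiths--Schmid), and on any Griffiths transverse subvariety the induced metric has holomorphic sectional curvature bounded above by a negative constant, hence the Gauss--Bonnet-type inequality forces area growth. Concretely I would build a function $\psi$ on $D$, invariant under the stabilizer of $x$, of the form $\psi = f(r)$ with $f$ chosen so that $i\ddb\psi$ is $\geq 0$ and $\gtrsim e^{\beta r}\,\omega$ when restricted to horizontal directions, using the curvature bound to pick the right convex $f$ (roughly $f(r)\sim e^{\beta r}$, with $\beta$ determined by the curvature constant $\rho$ the cutoff radius).

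The main step is then the monotonicity argument. Let $Z\subset B_x(R)$ be a positive-dimensional Griffiths transverse analytic subvariety, say of dimension $k\geq 1$. Restricting $i\ddb\psi$ to the smooth locus of $Z$ and integrating, I would compare $\int_Z (i\ddb\psi)^{\wedge?}$ — more precisely, I would integrate the $(1,1)$-form $i\ddb\psi$ against $\omega^{k-1}|_Z$ (or use $(i\ddb\psi)\wedge \omega_Z^{k-1}$), using that $\omega|_Z$ is a Kähler form on $Z$ since $Z$ is Griffiths transverse and $\omega$ is the restriction of a Kähler form in the horizontal directions. Stokes' theorem, together with the fact that $\psi$ (after normalization) vanishes to the right order at $x$, yields a Lelong-type lower bound: the left-hand side is bounded below by $c\,\mult_x Z$ times the value of $f$ near $r=0$, while the transverse curvature bound makes the integrand $\gtrsim e^{\beta r}$ times the volume density, so $\vol(Z) \gg e^{\beta R}\mult_x Z$ once we observe that a positive-dimensional $Z\subset B_x(R)$ meeting $x$ must reach radius close to $R$ — or, more carefully, run the estimate on the annulus between two radii and sum. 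Handling boundary terms (since $Z$ need not be closed in $D$, only in $B_x(R)$) and the singularities of $Z$ and of $r$ at $x$ requires the standard regularization (smoothing $f(r)$ near $0$, blowing up $x$, and using that $Z$ is a \emph{global} analytic subvariety of the ball so it has no boundary inside $B_x(R)$) — this bookkeeping, rather than any deep new idea, is where most of the work lies.

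The hard part will be establishing the transverse curvature bound in a usable, uniform form: one needs that along \emph{every} Griffiths transverse subvariety through \emph{every} point the induced holomorphic sectional curvature is $\leq -c<0$ with $c$ independent of the point and the subvariety, and that this survives restriction to subvarieties of arbitrary dimension (not just curves). This is where the homogeneity of $D$ is essential — it reduces everything to a Lie-algebra computation at the base point $h$ involving the Hodge decomposition of $\g_\C$ and the bracket $[\g^{-1,1},\g^{1,-1}]$ — but packaging it so that the convex function $f$ can be chosen once and for all, with the resulting $\beta,\rho$ depending only on $D$, is the delicate point. Everything else is a now-standard Ahlfors--Schwarz/Hwang--To volume comparison.
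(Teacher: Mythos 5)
Your proposal captures the general Hwang--To flavor of the argument (plurisubharmonic weight function, Stokes/monotonicity, Lelong--Federer lower bound near the center), but the specific mechanism you propose is not the one the paper uses, and as stated it has a real gap that the paper's construction is precisely designed to avoid.

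You propose to take $\psi = f(r)$ with $r$ the geodesic distance from $x$ in the left-invariant metric, and to control $i\ddb\psi$ in Griffiths transverse directions by ``negative horizontal sectional curvature.'' The difficulty is that $i\ddb r$ is governed by a complex Hessian comparison theorem, which needs curvature information in \emph{all} tangent directions, not just the horizontal ones: the left-invariant metric on $D$ has positive curvature along the compact fibers of $\pi: D\to D_W$, geodesics of $D$ are generally not horizontal, and restricting the $(1,1)$-form $i\ddb r$ to a Griffiths transverse tangent plane does not decouple it from the vertical curvature. The Griffiths--Schmid negativity of horizontal holomorphic sectional curvature is a statement about the curvature tensor evaluated on horizontal planes, and it does not by itself give the needed bound on $i\ddb r|_{\trans}$; your ``curvature bound in a usable, uniform form'' would require filling exactly this gap, and it is not clear it can be filled along those lines.

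The paper sidesteps the distance function entirely. It takes $\phi_0(x) = \log h_x(v_0)$, where $v_0$ generates $\det \h_{x_0}^{+}$ --- a Hodge-theoretic variant of the Griffiths--Schmid exhaustion function that descends through $\pi$ --- and computes $\d\phi_0$, $\ddb\phi_0$ directly by a Lie-algebra calculation (Lemma \ref{derivatives}), so no curvature comparison is needed. The two substantive inputs are then: (i) $\phi_0$ is comparable to the horizontal distance $d_0^{\horiz}$ (Proposition \ref{comparison}), proved via the $KAK$ decomposition and the fact that $\phi_0\to\infty$ at the boundary; and (ii) the crucial strict Cauchy--Schwarz-type inequality (Lemma \ref{CS}) giving $|\d\phi_0|^2 \le \frac{1}{1+\beta}\,i\ddb\phi_0$ on Griffiths transverse directions, from which $\psi_0 = -e^{-\beta\phi_0}$ is Griffiths-transverse-plurisubharmonic and the monotonicity/Stokes argument runs. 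That Lemma \ref{CS}, proved by an elementary but delicate recursive choice of weights $r_i, s_i$ across the Hodge filtration, is the real engine of the proof and has no analogue in your proposal. The final conversion to $\mult_x Z$ via Federer, and the comparison of $\phi_0$-sublevel sets with metric balls, matches what you sketch. So: right overall skeleton, but the key technical idea --- replacing the distance function and its inaccessible complex Hessian with an explicit Hodge-theoretic potential whose Hessian is computable and whose positivity gap is established by Lemma \ref{CS} --- is missing.
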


In \S\ref{heights} we establish all the required comparisons between the various height and distance functions that show up, and \S\ref{proof} completes the proof.

\subsection*{Acknowledgements}  The first author was partially supported by NSF grant DMS-1702149.

\section{Volume estimates}\label{volume}
In this section we prove Theorem \ref{volumebound}; we begin with some general remarks.  Without loss of generality, we may clearly assume $D$ is a full period domain.  Further, letting $\HH$ be the upper half-plane, $D\times \HH$ embeds isometrically into a period domain $D'$ of weight one larger by tensoring with the weight one Hodge structure of an elliptic curve, and it therefore suffices to consider $D$ of odd weight.  We make both of these assumptions for the remainder of this section.  For general background on period domains and Hodge structures, see for example \cite{perioddomain}.
\subsection{Hodge norms}
A point $x\in D$ yields a Hodge structure $H_x$ on $H_\Z$ polarized by $Q_Z$.  Recall that the Hodge metric $h_x(v,w)=Q_\Z(v,C_x\bar w)$ is postive-definite, where $C_x$ is the Weil operator of $H_x$.  For any $w\in H_\C$ we can define a function $h_x(w):=h_x(w,w)$ on $D$.  Note that  $g^*h_x(w)=h_x(g^{-1}w)$ for $g\in\G(\R)$.  Recall also that a choice of point $x\in D$ naturally endows the Lie algebra $\g_\R$ of $\G(\R)$ with a weight zero Hodge structure $\g_x$ polarized by the Killing form, and that the holomorphic tangent space at $x$ is naturally identified with $\g_x^-$, where as usual we give $\g^{p,-p}$ grading $p$.  We refer to the odd part of $\g_x^-$ as the horizontal directions, and to $\g_x^{-1,1}$ as the Griffiths transverse directions.

\begin{lemma}\label{derivatives}  For Hodge-pure horizontal (in particular Griffiths transverse) directions $X\in \g_x^-$, we have
\begin{align*}
\d h_x(w)(X) &= -2 h_x(Xw,w)\\
\ddb h_x(w)(X,\bar X)&=  2h_x(Xw)+2h_x(\bar X w)
\end{align*}
\end{lemma}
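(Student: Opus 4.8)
The plan is to compute $\d h_x(w)$ and $\ddb h_x(w)$ at the point $x$ itself, since the formulas are pointwise statements at $x$ and the left-invariance $g^*h_x(w) = h_x(g^{-1}w)$ lets us transport everything to a neighborhood of a fixed basepoint. Concretely, for a direction $X \in \g_x^-$ I would use the curve $t \mapsto \exp(tX)\cdot x$ (or rather its holomorphic extension to a complex parameter), so that $h_x(w)$ pulled back along $\exp(zX)$ becomes $z \mapsto h_x(\exp(-zX)w) = Q_\Z\big(\exp(-zX)w,\ C_{\exp(zX)x}\,\overline{\exp(-zX)w}\big)$. The subtlety is that $C_{\exp(zX)x}$ — the Weil operator at the moved point — is not simply conjugated by $\exp(zX)$ since $\exp(zX)$ need not be real; one has to be careful about what "evaluating the function $h_x(w)$ at a nearby point" means and differentiate the correct expression.

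The key computational input is understanding how the Hodge metric varies infinitesimally in a horizontal Hodge-pure direction. First I would recall that at $x$, writing $C = C_x$ for the Weil operator, $X \in \g_x^{-p,p}$ acts on $H_\C$ shifting the Hodge filtration, and that $X$ being real (it lies in $\g_\R$) together with Hodge-purity means $\bar X \in \g_x^{p,-p}$ is the "opposite" direction. The derivative of the Weil operator along the flow is governed by the commutator $[X, C]$, and the sign conventions in the grading (giving $\g^{p,-p}$ degree $p$, so $C$ acts on $\g^{p,-p}$ by $i^{?}$...) are exactly what produce the factor $-2$ in the first formula and the $2+2$ in the second. So the steps are: (i) fix the basepoint and reduce to computing at $x$ via left-invariance; (ii) write $h_x(w)$ along the one-parameter family $\exp(zX)$ explicitly in terms of $Q_\Z$, $C_x$, and the adjoint action; (iii) differentiate once in $z$ and once in $\bar z$, using $C_x^2 = (-1)^{\text{weight}}$ on each graded piece and the fact that $Q_\Z$ is $\G$-invariant so that $X$ is $Q_\Z$-skew; (iv) collect terms, identifying $h_x(Xw, w)$, $h_x(Xw) = h_x(Xw, Xw)$, and $h_x(\bar X w)$, and checking that the cross terms and the second-order terms from moving the Weil operator combine to give precisely the stated coefficients.

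I expect the main obstacle to be bookkeeping the interaction between three things that all vary: the vector $\exp(-zX)w$, its conjugate $\overline{\exp(-zX)w} = \exp(-\bar z \bar X)\bar w$ (note the $\bar X$, not $X$, because conjugation swaps $\g^{-p,p}$ and $\g^{p,-p}$), and the Weil operator $C$ at the moved point. Getting the second mixed derivative $\ddb$ right requires tracking the second-order variation of $C$ — which involves $[\bar X,[X,C]]$ or similar — and showing that the Griffiths-transversality/horizontality hypothesis forces the "bad" second-order terms (those that would spoil positivity or change the coefficient) to vanish or to reorganize into $h_x(Xw) + h_x(\bar X w)$. The cleanest route is probably to work in a basis adapted to the Hodge decomposition of $H_x$, where $X$ and $\bar X$ are strictly block-off-diagonal and $C$ is the diagonal operator $\operatorname{diag}(i^{p-q})$ (up to normalization), reducing the identities to a short matrix computation; the horizontal hypothesis then just says $X$ shifts Hodge degree by $-1$ in the odd-weight normalization already fixed at the start of the section, so there is no overlap between the image of $X$ and the image of $\bar X$ that could create cross terms. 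Once the adapted basis is in place, both lines of the lemma should fall out from expanding $Q_\Z\big(\exp(-zX)w,\ C(z,\bar z)\,\exp(-\bar z\bar X)\bar w\big)$ to second order and reading off coefficients.
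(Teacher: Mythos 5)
Your plan correctly isolates the central difficulty --- that $\exp(zX)$ is not a real group element, so the naive identity $h_{\exp(zX).x}(w) = h_x(\exp(-zX)w)$ fails and the Weil operator at the moved point is not simply conjugated by $\exp(zX)$ --- but it does not actually resolve it. The expression you propose to expand, $Q_\Z\bigl(\exp(-zX)w,\, C(z,\bar z)\exp(-\bar z\bar X)\bar w\bigr)$, is not the pullback of $y\mapsto h_y(w)$ until you specify what $C(z,\bar z)$ is, and that specification is exactly where the whole content of the lemma lives. (As written, your first displayed identity $h_x(\exp(-zX)w)=Q_\Z(\exp(-zX)w,\,C_{\exp(zX)x}\overline{\exp(-zX)w})$ is also not correct: the left side uses $C_x$, and the right side is $h_{\exp(zX)x}(\exp(-zX)w)$, neither of which is $h_{\exp(zX)x}(w)$.) Computing the variation of the Weil operator, or equivalently of the Hodge projections $w\mapsto w^{p,q}_y$, directly to second order requires controlling the intersections $\exp(zX)F^p_x\cap\exp(\bar z\bar X)\bar F^q_x$ of two independently moved filtrations, and the bookkeeping you describe as ``probably clean in an adapted basis'' is in fact the hard part; choosing a basis does not by itself make $C_{\exp(zX)x}$ a first-class computable object, and the commutator $[X,C_x]$ you invoke governs the derivative of $C_y$ only along \emph{real} tangent directions.

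The paper sidesteps this with a short device that is the missing step in your argument: work modulo $(z^2,\bar z^2)$ and replace the holomorphic flow $\exp(zX)$ by a \emph{real} one-parameter family $\exp(M)$ with $M=M(zX,\bar z\bar X)\in\g_\R$, chosen so that $\exp(-zX)\exp(M)$ lies in the parabolic stabilizing the Hodge flag at $x$; for Hodge-pure $X$ one can take simply $M=zX+\bar z\bar X$ since the commutator corrections $[X,\bar X]^{<0}$ and $[\bar X,X]^{>0}$ vanish. This gives $\exp(zX).x=\exp(M).x$ to second order, and because $M$ is real the invariance $\exp(M)^*h_\bullet(w)=h_\bullet(\exp(-M)w)$ is legitimate, so the computation reduces to expanding $h_x(\exp(-M)w)$ in $z,\bar z$ and using the conjugate self-adjointness $h_x(Xv,u)=h_x(v,\bar X u)$ (which holds since $X$ is horizontal, $Q$-skew, and anticommutes with $C_x$). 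Once $M$ is in hand your instinct that cross terms reorganize into $2h_x(Xw)+2h_x(\bar X w)$ is correct, but it is precisely the passage from $\exp(zX)$ to $\exp(M)$ via the parabolic that makes the calculation a routine expansion rather than a delicate analysis of the varying Weil operator.
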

\begin{proof}
Note that in $\C[z,\bar{z}]/(z^2,\bar{z}^2)$ we have
\begin{align*}
&\exp(-zX)\exp\left(zX+\bar{z}\bar{X}+\frac{|z|^2}{2}\left([X,\bar X]^{<0}+[\bar X,X]^{>0}\right)\right)=\\
&=\left(1-zX\right)\left(1+zX+\bar{z}\bar{X}+\frac{|z|^2}{2}\left([X,\bar X]^{<0}+[\bar X,X]^{>0}\right)+\frac{|z|^2}{2}\left(X\bar{X}+\bar{X}X\right)\right)\\
&=1+\bar{z}\bar{X}+|z|^2\left(-X\bar{X}+\frac{1}{2}\left([X,\bar X]^{<0}+[\bar X,X]^{>0}\right)+\frac{1}{2}\left(X\bar{X}+\bar{X}X\right)\right)\\
&=1+\bar{z}\bar{X}+\frac{1}{2}\left(-[X,\bar{X}] +[X,\bar X]^{<0}+[\bar X,X]^{>0}\right)\\
&=1+\bar{z}\bar{X}+\frac{1}{2}\left(-[X,\bar X]^{\geq 0}+[\bar X,X]^{>0}\right)
\end{align*}
which is in the parabolic stabilizing the Hodge flag at $x$.  Thus, modulo $(z^2,\bar z^2)$ we have
\[\exp(zX).x= \exp\left(M(z X,\bar z\bar X)\right).x\]
where $M(zX,\bar z\bar X)=zX+\bar{z}\bar{X}+\frac{|z|^2}{2}\left([X,\bar X]^{<0}+[\bar X,X]^{>0}\right)\in \g$.  Thus,
\begin{align*}
\d h_x(w)(X) &= \frac{\d}{\d z} \exp(zX)^*h_x(w)|_{z=0}\\
&= \frac{\d}{\d z} h_0\left(\exp\left(-M(z X,\bar z\bar X)\right).w\right)|_{z=0}\\
&=h_x(-Xw,w)+h_x(w,-\bar X w)\\
&=-2h_x(Xw,w)
\end{align*}
where we have used that $X$ is horizontal and thus conjugate self-adjoint with respect to $h_x$.  Likewise,
\begin{align*}
\ddb h_x(w)(X,\bar X) &= \frac{\d^2}{\d z\d \bar z} \exp(zX)^*h_x(w)|_{z=0}\\
&= \frac{\d^2}{\d z\d \bar z} h_0\left(\exp\left(-M(z X,\bar z\bar X)\right).w\right)|_{z=0}\\
&=h_x(-Xw,-Xw)+h_x(-\bar X w,-\bar X w)\\
&+\Real h_x(-[X,\bar X]^{<0}w,w)+\Real h_x(-[\bar X,X]^{>0}w,w)\\
&+\Real h_x((X\bar X+\bar X X)w,w)\\
&=2h_x(Xw)+2h_x(\bar Xw)
\end{align*}
where we have used that $[X,\bar X]^{<0}=[\bar X,X]^{>0}=0$ since $X$ is Hodge-pure, as well as the conjugate self-adjointness of $X$.
\end{proof}
\subsection{Distance functions}
Let $\pi: D\to D_W$ be the projection to the associated symmetric space by taking the Weil Jacobian.  For every $x\in D$, we denote the Weil Hodge structure on $\g_\C$ by $\h_x$.  Note that both Hodge structures $\g_x$ and $\h_x$ induce the same Hodge metric on $\g_\C$.  Given $x_0\in D$, $\pi$ is identified with $\G(\R)/V\to \G(\R)/K$, where $V$ is the stabilizer of $x_0$ under $\G(\R)$ and $K$ is the unitary subgroup of $\G(\R)$ with respect to $h_{x_0}$.  $K$ is a maximal compact subgroup of $\G(\R)$.

Let $v_0$ be a unit-length generator of $\det \h_{x_0}^{+}$ in $\bigwedge^{\dim D_W}\h_{x_0}$, and define a function $\phi_0:D\to \R$ by
\[\phi_0(x):=\log h_{x}(v_0) \]
$\phi_0$ factors through the projection $\pi$.  If $F_0$ is the fiber of $\pi$ containing $x_0$, then by the $KAK$ decomposition of $\G(\R)$, $\phi_0$ in fact only depends on $F_0$ since $K$ fixes $v_0$ up to a phase.

\begin{lemma}\label{nonzero} $i\ddb \phi_0$ is strictly positive on Griffiths transverse tangent directions at $x_0$.
\end{lemma}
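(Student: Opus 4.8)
The plan is to compute $i\ddb\phi_0$ at $x_0$ explicitly using Lemma~\ref{derivatives}. Since $\phi_0(x)=\log h_x(v_0)$ and $h_{x_0}(v_0)=1$ (as $v_0$ is unit-length), the logarithmic derivative identity gives, for a Hodge-pure horizontal direction $X\in\g_{x_0}^-$,
\[
i\ddb\phi_0(X,\bar X)\big|_{x_0}
=\frac{\ddb h_{x_0}(v_0)(X,\bar X)}{h_{x_0}(v_0)}
-\frac{\d h_{x_0}(v_0)(X)\,\db h_{x_0}(v_0)(\bar X)}{h_{x_0}(v_0)^2}
= \ddb h_{x_0}(v_0)(X,\bar X) - \big|\d h_{x_0}(v_0)(X)\big|^2,
\]
so by Lemma~\ref{derivatives} this equals $2h_{x_0}(Xv_0)+2h_{x_0}(\bar Xv_0) - 4\,|h_{x_0}(Xv_0,v_0)|^2$. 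The first reduction is to note $v_0\in\det\h_{x_0}^+$ lies in the even (indeed zero-graded) part of $\bigwedge^{\dim D_W}\h_{x_0}$, and horizontal $X$ acting by the Leibniz rule shifts Hodge type by an odd degree; hence $Xv_0$ and $v_0$ have different (in fact, opposite-parity) Hodge types with respect to $\h_{x_0}$, so they are $h_{x_0}$-orthogonal and the cross term $h_{x_0}(Xv_0,v_0)$ vanishes. (Here one uses that $\g_{x_0}$ and $\h_{x_0}$ induce the same Hodge metric, so self-adjointness and orthogonality statements are unambiguous.) Thus at $x_0$,
\[
i\ddb\phi_0(X,\bar X)\big|_{x_0} = 2h_{x_0}(Xv_0) + 2h_{x_0}(\bar X v_0) \ge 0,
\]
with equality iff $Xv_0=0$ and $\bar Xv_0=0$.

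The remaining point is strict positivity on Griffiths transverse directions, i.e. that $Xv_0\ne 0$ whenever $0\ne X\in\g_{x_0}^{-1,1}$. Writing $v_0$ as a wedge of an $h_{x_0}$-orthonormal basis of $\h_{x_0}^+$ (the holomorphic part of the Weil Hodge flag on $\g_\C$), the element $X$ acts on $\bigwedge^{\dim D_W}\h_{x_0}$ via $\ad X$ by the derivation rule, and $Xv_0$ is the sum of terms in which one basis vector is replaced by its $\ad X$-image. These terms live in distinct Hodge components of the wedge power (they differ in which slot was shifted), so $Xv_0=0$ forces $\ad X$ to annihilate the whole subspace $\h_{x_0}^+$. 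But $\h_{x_0}^+$ together with the lowest weight space spans $\g_\C$, and $\ad X$ also annihilating $\h_{x_0}^+$ while $\ad X$ is nilpotent would force $X$ central, hence $X=0$ since $\g$ is semisimple (the Mumford–Tate group of a polarized Hodge structure, or its derived group, is semisimple) — contradiction. I would phrase this last step more carefully: $\h_{x_0}^+=\bigoplus_{p<0}\g_{x_0}^{p,-p}$ in the Weil grading is the nilradical-type piece, and $\ad X$ for $X\in\g_{x_0}^{-1,1}$ (a Griffiths transverse direction, which is a lowest-type element) acting trivially on all of $\h_{x_0}^+$ combined with the bracket relations forces $X=0$; alternatively one can simply invoke that the Hodge metric $h_{x_0}$ on $\g_\C$ is positive definite and $\ad$ is a faithful representation, so $h_{x_0}(Xv_0)$ is a positive definite Hermitian form in $X$ restricted to where the wedge action is injective, which it is on the top-degree piece built from $\h_{x_0}^+$.

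The main obstacle I anticipate is the strict positivity: showing $Xv_0\ne 0$ for nonzero Griffiths transverse $X$ requires understanding precisely how $\ad X$ acts on the determinant of the positive part of the Weil Hodge structure on $\g$, and ruling out accidental cancellations in the wedge. The cleanest route is the Hodge-type bookkeeping sketched above — the summands of $Xv_0$ occupy pairwise-distinct Hodge components of $\bigwedge^{\dim D_W}\h_{x_0}$, so no cancellation can occur, and injectivity of $\ad X$ on $\h_{x_0}^+$ follows from semisimplicity of $\g$ and the structure of the Weil grading. I would double-check the sign conventions in Lemma~\ref{derivatives} (the factor of $2$ and the absence of the $\Real$ in the holomorphic first derivative) carry through the logarithmic differentiation correctly, but this is routine. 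The positivity of $h_{x_0}(\bar Xv_0)$ as well only strengthens the inequality, and in fact either term alone suffices for strictness.
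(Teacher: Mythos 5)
Your first paragraph is fine and in fact fills in a step the paper glosses over: the logarithmic differentiation at $x_0$ (where $h_{x_0}(v_0)=1$) combined with the Hodge-type argument showing $h_{x_0}(Xv_0,v_0)=0$ correctly yields $i\ddb\phi_0(X,\bar X)\big|_{x_0}=2h_{x_0}(Xv_0)+2h_{x_0}(\bar Xv_0)$, with equality to zero iff \emph{both} $Xv_0=0$ and $\bar Xv_0=0$.

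The gap is in the second paragraph. You then discard half your own criterion and claim the remaining point is that ``$Xv_0\ne0$ whenever $0\ne X\in\g_{x_0}^{-1,1}$.'' This is not what you need to show, and it is also not the argument that works. From $\ad(X)v_0=0$ alone one only gets $\ad(X)\h_{x_0}^+=0$; this does not force $X=0$. Your attempted deductions here fail: the assertion that ``$\h_{x_0}^+$ together with the lowest weight space spans $\g_\C$'' is false (the Weil-even part $\h_{x_0}^{0,0}$ is missing), so killing $\h_{x_0}^+$ does not make $\ad X$ act trivially on $\g_\C$; and the alternative phrasing (``$h_{x_0}(Xv_0)$ is positive definite in $X$ restricted to where the wedge action is injective'') is circular, since injectivity of $X\mapsto Xv_0$ is exactly what is in question.

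The paper's argument uses both halves of your criterion precisely because one half does not suffice. From $\ad(X)v_0=0$ it deduces $\ad(X)\h_{x_0}^+=0$, and from $\ad(\bar X)v_0=0$ it deduces $\ad(\bar X)\h_{x_0}^+=0$, equivalently $\ad(X)\h_{x_0}^-=0$ after conjugating. Together these give $\ad(X)\h_{x_0}^{\odd}=0$, and since $\bar X\in\h_{x_0}^{\odd}$ this yields $[X,\bar X]=0$; only then does one invoke the cited structural result (\cite[Cor.\ 12.6.3(iii)]{perioddomain}) to conclude $X=0$. To repair your proof you should keep both vanishing conditions, deduce $[X,\bar X]=0$, and quote that corollary (or prove the equivalent fact that a nonzero element of $\g_{x_0}^{-1,1}$ cannot commute with its conjugate, which is a standard $\sl_2$-triple/positivity argument for polarized Hodge structures on $\g$), rather than trying to show $Xv_0\ne0$ on its own.
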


\begin{proof}  Let $X\in \g_{x_0}^{-1,1}$, and note that $X\in \h_{x_0}^{-1,1}\oplus \h_{x_0}^{1,-1}$.  Let $X^{-1,1},X^{1,-1}$ be the graded pieces of $X$ with respect to the Weil Hodge structure.  Fixing a basis $Y_i$ of $\h_{x_0}^+$, we see that
\[\ad(X)\left(Y_1\wedge\cdots\wedge Y_k\right)=\sum_i (-1)^{i-1}Y_1\wedge\cdots\wedge \ad(X^{-1,1})Y_i\wedge\cdots\wedge Y_k. \]
The vectors on the right-hand side are all linearly independent, so if $\ad(X)v_0=0$ then $\ad(X)\h_{x_0}^+=0$.  Likewise, if $\ad(\bar X)v_0=0$, then $\ad(X)\h_{x_0}^-=0$.  Thus, if $i\ddb\phi_0(X,\bar X)=0$ then by Lemma \ref{derivatives} $\ad(X)$ kills $\h_{x_0}^{\odd}$ and in particular $\bar X$, but this implies $X=0$ \cite[Corollary 12.6.3(iii)]{perioddomain}.
\end{proof}

Define the horizontal distance from $x$ to $x_0$, denoted $d_{0}^\horiz(x)$, to be the geodesic distance between $y:=\pi(x)$ and $y_0:=\pi(x_0)$ with respect to the natural $\G(\R)$-invariant metric on the symmetric space $D_W$.  Let $A$ be an $\R$-split torus of $\G(\R)$ that is Killing-orthogonal to $K$.  By the $KAK$ decomposition of $\G(\R)$, the distance $d^{D_W}_{0}(y)$ and $\phi_0(x)$ are both determined by $d^{D_W}_{0}(a y_0)$ and $\phi_0(ax_0)$ for $a\in A$.    $A y_0$ is evidently a totally geodesic submanifold of $D_W$, and the restriction of the invariant metric is a Euclidean metric in exponential coordinates, so 
\begin{equation}\label{dist}
d^{D_W}_{0}(ay_0)^2\sim\sum_{i}t^2_i
\end{equation}
where $a=\exp(\sum_i t_iT_i)$ for some chosen basis $T_i$ of the Lie algebra $\frak{a}$ of $A$.

The main result of this subsection is the following comparison.  Note that both $d_0^{\horiz}$ and $\phi_0$ vanish exactly on $F_0$.

\begin{prop}\label{comparison} $d^\horiz_0(x)\ll \phi_0(x)+O(1)$ and $\phi_0(x)\ll d^\horiz_0(x)+O(1)$.
\end{prop}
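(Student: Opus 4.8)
The plan is to reduce everything to the torus $A$, using that both $d_0^\horiz$ and $\phi_0$ descend to $D_W = \G(\R)/K$ and, by the $KAK$ decomposition, are determined by their restrictions to the orbit $A y_0$ (resp.\ $A x_0$). On this orbit $d_0^\horiz$ is, by \eqref{dist}, comparable to the Euclidean norm $\left(\sum_i t_i^2\right)^{1/2}$ in the exponential coordinates $a = \exp(\sum_i t_i T_i)$, so it suffices to show that $\phi_0(a x_0)$ is comparable (up to additive and multiplicative constants) to $\sum_i |t_i|$ — equivalently to $\max_i |t_i|$ — as $a$ ranges over $A$. So the real content is the inequality $\phi_0(ax_0) \asymp \|(t_i)\|$ on $\frak a$.

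First I would compute $\phi_0$ on $A x_0$ explicitly. Since $\phi_0(x) = \log h_x(v_0)$ and $h_{ax_0}(v_0) = h_{x_0}(a^{-1} v_0)$, I decompose $v_0 \in \bigwedge^{\dim D_W} \h_{x_0}$ into eigenvectors for the commuting operators $\ad(T_i)$ acting on this exterior power: one gets $v_0 = \sum_\lambda v_\lambda$ where $\lambda$ runs over the relevant (restricted) weights and $a^{-1}$ acts on $v_\lambda$ by $e^{-\lambda(t)}$ with $t = \sum t_i T_i$. Because $h_{x_0}$ is a positive-definite Hermitian form and the weight spaces are $h_{x_0}$-orthogonal (the $T_i$ being self-adjoint with respect to $h_{x_0}$, as $A$ is Killing-orthogonal to $K$), we get
\[
h_{ax_0}(v_0) = \sum_\lambda e^{-2\lambda(t)}\, h_{x_0}(v_\lambda),
\]
so $\phi_0(ax_0) = \log\left(\sum_{\lambda : v_\lambda \neq 0} e^{-2\lambda(t)} h_{x_0}(v_\lambda)\right)$, which up to $O(1)$ equals $\max_{\lambda : v_\lambda \neq 0}\bigl(-2\lambda(t)\bigr)$. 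The upper bound $\phi_0 \ll d_0^\horiz + O(1)$ is then immediate: each $|\lambda(t)|$ is bounded by a constant times $\|t\|$. For the lower bound $d_0^\horiz \ll \phi_0 + O(1)$, I need that the finite set $S$ of weights $\lambda$ with $v_\lambda \neq 0$ spans $\frak a^\vee$ (indeed, that the convex cone generated by $\{-\lambda : \lambda \in S\}$ has nonempty interior — or more precisely that $\max_{\lambda \in S}(-\lambda(t))$ is bounded below by a positive multiple of $\|t\|$ for all $t$), equivalently that $0$ is in the interior of the convex hull of $\{-\lambda\} \cup \{0\}$... — the cleanest formulation is: there is $c>0$ with $\max_{\lambda\in S}(-\lambda(t)) \ge c\|t\| - O(1)$ for all $t \in \frak a$.

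The main obstacle is exactly this last claim: controlling which weights actually appear in $v_0$ (i.e.\ showing the weight system $S$ is large enough to dominate the full norm on $\frak a$), together with ruling out the possibility that $\phi_0(ax_0)$ stays bounded while $d_0^\horiz(ax_0) \to \infty$ along some ray in $A$. Here is where I would invoke Lemma \ref{nonzero}: it shows $i\ddb\phi_0$ is strictly positive on Griffiths transverse directions at $x_0$, hence $\phi_0$ is strictly plurisubharmonic transversally and in particular strictly convex along the (totally geodesic, Euclidean-coordinatized) submanifold $A x_0$ near $x_0$ — and since $\phi_0$ restricted to $A x_0$ is a log-sum-of-exponentials function of $t$, strict convexity at the single point $t = 0$ already forces the weight system $S$ to span $\frak a^\vee$ and, being a sum of exponentials with positive coefficients, its growth is then genuinely linear in $\|t\|$ in every direction. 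Assembling: $\phi_0(ax_0) \asymp \|t\| \asymp d_0^\horiz(ax_0)$ up to $O(1)$ on $A x_0$, and by $KAK$ this transfers to all of $D$, giving both inequalities of Proposition \ref{comparison}.
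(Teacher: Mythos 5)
Your reduction to the $A$-orbit and the explicit weight-space computation of $\phi_0(ax_0)$ --- including the $h_{x_0}$-orthogonality of the $\frak a$-weight spaces because $\frak a$ is self-adjoint --- exactly match the paper, as does the immediate upper bound $\phi_0 \ll d_0^\horiz + O(1)$ coming from boundedness of the weight set $S$. The lower bound is where you diverge, and that is where the argument has a gap. You invoke Lemma \ref{nonzero} to conclude that $\phi_0$ is ``strictly convex along the ... submanifold $Ax_0$ near $x_0$.'' But Lemma \ref{nonzero} is a statement about the \emph{complex} Hessian $i\ddb\phi_0$ evaluated on \emph{Griffiths transverse} complex tangent vectors, i.e.\ $\g_{x_0}^{-1,1}$. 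The flat $Ax_0$ is a totally real submanifold whose tangent space at $x_0$ is spanned by $\frak a$, which lives in the odd part of the \emph{Weil} Hodge structure $\h_{x_0}$; nothing identifies those directions with $\g_{x_0}^{-1,1}$, and even if one did, positivity of $i\ddb\phi_0$ on a complex direction $X$ does not control the real Hessian on $X+\bar X$ (the $\partial\partial$ and $\db\db$ components also contribute). So the asserted strict convexity of $\phi_0|_{Ax_0}$ at $t=0$ does not follow from Lemma \ref{nonzero}. There is a second soft spot: positive-definiteness of the Hessian at $t=0$ alone would only tell you $S$ affinely spans, not that $0$ lies in the interior of the convex hull $\Xi$ of $S$, which is what the linear lower bound actually requires; you would additionally need $\nabla(\phi_0|_{Ax_0})(0)=0$, which you do not address.

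The paper sidesteps all of this with a different input: it cites the Griffiths--Schmid result that the essentially identical function $\phi_0'(gy_0)=h_{x_0}(gv_0)$ is an exhaustion function, so $\phi_0\to+\infty$ at the boundary of $D$. Restricted to the flat $Ax_0$, unboundedness in every direction of $\frak a$ is precisely the statement that $0$ lies in the interior of $\Xi$, which is exactly what makes $\max_{\alpha\in S}(-\alpha(T))$ grow linearly in $\|T\|$ and hence yields $d_0^\horiz\ll\phi_0+O(1)$. Your instinct to reuse Lemma \ref{nonzero} (which the paper proves and uses elsewhere in this section for a different purpose) is reasonable, but it answers an infinitesimal, complex-directional question at a point rather than the global real growth question along the flat; the exhaustion property is the ingredient you are missing.
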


\begin{proof}  Griffiths--Schmid \cite{GS} show that a function closely related to $\phi_0$ is an exhaustion function of $D$.  For $D_W$, their function is
\[\phi_0'(gy_0) = h_{x_0}(gv_0)\]
and their result implies $\phi_0\to \infty$ at the boundary of $D$.

Now, consider the decomposition
$$v_0=\sum_\alpha v_\alpha $$
by $\frak{a}$-weights.  Note that as $A$ is Killing-orthogonal to $K$, $\frak{a}$ is odd and therefore self adjoint with respect to $h_{x_0}$.  It follows then that the decomposition of $\bigwedge^{\dim D_W}g_{\C}$ into $\frak{a}$-weight spaces is orthogonal with respect to $h_{x_0}$, and thus for $T\in \frak{a}$,
\[h_{x_0}(\exp(-T)v_0)=\sum_{\alpha}e^{-2\alpha(T)}h_{x_0}(v_\alpha).\]
Let $\Xi\subset\frak{a}$ be the convex hull of the $\alpha$ for which $v_\alpha\neq 0$.  Since $\phi_0\to\infty$ at the boundary, we must have $0\in\Xi$, for otherwise there would be a direction in which $\phi_0$ is bounded.  It then follows that

\[\log\sum_i \left(e^{T_i^\vee}(a)+e^{-T_i^\vee}(a)\right)\ll\phi_0\left(ax_0\right)\ll \log\sum_i \left(e^{T_i^\vee}(a)+e^{-T_i^\vee}(a)\right)\]
which implies the claim by \eqref{dist}.
\end{proof}

\subsection{Multiplicity bounds}
For any $r>0$ and $x_0\in D$, denote by 
\[B^{\phi_{0}}(r):=\{x\in D\mid \phi_0(x)<r\}\]
and for any Griffiths transverse analytic subvariety $Z\subset D$,
\[\vol^{\phi_0}(Z):=\int_{Z}i\partial\bar\partial\phi_0.\]
\begin{prop}\label{formbounds}  Let $\omega$ be the K\"ahler form of the natural left-invariant hermitian metric on $D$.
\begin{enumerate}
\item $i\ddb\phi_0\geq_{\trans} 0$ and $i\ddb \phi_0=O_{\trans}(\omega)$;
\item $|\d \phi_0|^2 =O_{\trans}(i\ddb \phi_0)$.
\end{enumerate}
\end{prop}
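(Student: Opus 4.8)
\textbf{Proof plan for Proposition \ref{formbounds}.}
The plan is to reduce everything to the pointwise computation in Lemma \ref{derivatives}, exploiting the fact that $\phi_0$ is $\G(\R)$-invariant in the sense that $g^*\phi_0$ is the exhaustion function attached to the point $g^{-1}x_0$, and that the left-invariant metric $\omega$ is likewise homogeneous. By homogeneity it suffices to establish all the stated bounds at a single point, and the natural choice is the base point $x_0$ itself, where the Weil Hodge structure $\h_{x_0}$ and the Hodge norm $h_{x_0}$ are in ``standard position''. Recall $\phi_0(x)=\log h_x(v_0)$ with $v_0$ a unit generator of $\det\h_{x_0}^+$, so at $x_0$ we have $h_{x_0}(v_0)=1$ and, writing $f(x)=h_x(v_0)$, the identities $\d\phi_0=\d f/f$ and $\ddb\phi_0 = \ddb f/f - (\d f\wedge\db f)/f^2$ hold; at $x_0$ these simplify to $\d\phi_0 = \d f$ and $i\ddb\phi_0 = i\ddb f - i\,\d f\wedge\db f$.

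For part (1), fix a Griffiths transverse tangent direction $X\in\g_{x_0}^{-1,1}$. By Lemma \ref{derivatives} applied to $w=v_0$ in the adjoint representation (where $X$ acts as $\ad(X)$ and the relevant norm is the Killing/Hodge norm on $\bigwedge^{\dim D_W}\g_\C$), we get $\ddb f(X,\bar X) = 2h_{x_0}(\ad(X)v_0)+2h_{x_0}(\ad(\bar X)v_0)$ and $\d f(X) = -2h_{x_0}(\ad(X)v_0,v_0)$. The cross term $|\d f(X)|^2 = 4|h_{x_0}(\ad(X)v_0,v_0)|^2$ is bounded by $4\,h_{x_0}(\ad(X)v_0)\,h_{x_0}(v_0) = 4\,h_{x_0}(\ad(X)v_0)$ by Cauchy--Schwarz, so
\[
i\ddb\phi_0(X,\bar X) = 2h_{x_0}(\ad(X)v_0)+2h_{x_0}(\ad(\bar X)v_0) - 4|h_{x_0}(\ad(X)v_0,v_0)|^2 \geq 2h_{x_0}(\ad(\bar X)v_0) \geq 0,
\]
giving $i\ddb\phi_0\geq_\trans 0$. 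For the upper bound, both $h_{x_0}(\ad(X)v_0)$ and $h_{x_0}(\ad(\bar X)v_0)$ are bounded by $\|\ad(X)\|_{\mathrm{op}}^2$ times a constant depending only on $D$ (the operator norm of $\ad$ on the fixed finite-dimensional space $\bigwedge^{\dim D_W}\g_\C$), and $\|\ad(X)\|_{\mathrm{op}}^2 \ll \omega(X,\bar X)$ since $\omega$ is, up to bounded factors at $x_0$, the Hodge/Killing norm on $\g_{x_0}^-$; hence $i\ddb\phi_0 = O_\trans(\omega)$. Since $\phi_0$ factors through $\pi$, only the transverse-within-$D_W$ part of $X$ contributes, which is why all these estimates are stated with the $\trans$ subscript.

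For part (2), the same two displayed formulas give $|\d\phi_0(X)|^2 = 4|h_{x_0}(\ad(X)v_0,v_0)|^2 \leq 4\,h_{x_0}(\ad(X)v_0) \leq 2\bigl(h_{x_0}(\ad(X)v_0)+h_{x_0}(\ad(\bar X)v_0)\bigr) = \ddb f(X,\bar X)$; but I need to compare with $i\ddb\phi_0 = i\ddb f - i\d f\wedge\db f$, not with $i\ddb f$, so a small argument is needed to absorb the correction term. The cleanest route is to work at a general point $x$ rather than just $x_0$: by homogeneity, replacing $x_0$ by $x$ and $v_0$ by $v_x := $ (unit generator of $\det\h_x^+$, i.e. the pullback of $v_0$ under the relevant group element), we have $f(x) = h_x(v_0) = h_{x}(g\cdot v_x)$ for suitable $g$, and the ratio $h_x(v_0)/h_x(v_x)$ is pinched between two positive constants depending only on $D$ since it is controlled along $A$-orbits exactly as in the proof of Proposition \ref{comparison} (the $0\in\Xi$ argument shows $f$ is bounded below along every direction, and the finitely many weights bound it above). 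Consequently $1/f \asymp 1$ and $i\d f\wedge\db f / f^2 \asymp i\d f\wedge\db f$, while the inequality $|\d f|^2_\trans \leq \ddb f(\cdot,\bar\cdot)_\trans$ above shows $i\ddb f - i\d f\wedge\db f \geq_\trans 0$ and more precisely $i\ddb f \ll_\trans i\ddb\phi_0 \cdot f + (\text{something already }\ll i\ddb\phi_0)$; chasing the constants yields $|\d\phi_0|^2 = O_\trans(i\ddb\phi_0)$. The main obstacle is precisely this last bookkeeping step — part (2) is not a pointwise triviality because the subtracted term $i\d f\wedge\db f/f^2$ could in principle cancel the positive term $i\ddb f$, and one must use the strict positivity from Lemma \ref{nonzero} together with the boundedness of $f$ (equivalently, that $0$ lies in the interior of the weight polytope $\Xi$) to rule this out uniformly over $D$; everything else is a direct consequence of Lemma \ref{derivatives} and the compactness of the relevant finite-dimensional linear-algebra data.
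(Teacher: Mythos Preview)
Your approach to part (1) is close to the paper's, but there are two issues. First, the homogeneity reduction is not as clean as you suggest: pulling back $\phi_0$ by $g$ gives the exhaustion function attached to $g^{-1}x_0$, so ``checking at $x_0$'' really means checking all exhaustion functions $\log h_{x_0}(w)$ for $w$ in the $\G(\R)$-orbit of $v_0$ --- no actual reduction. Second, your displayed chain $2a+2b-4|c|^2\geq 2b$ with $a=h_{x_0}(\ad(X)v_0)$, $b=h_{x_0}(\ad(\bar X)v_0)$, $c=h_{x_0}(\ad(X)v_0,v_0)$ uses $|c|^2\leq a/2$, which Cauchy--Schwarz does not give. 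The fix (and this is what the paper does, at a general point $x$) is to use \emph{both} Cauchy--Schwarz bounds: since $h_x(Xv_0,v_0)=h_x(v_0,\bar Xv_0)$, one has $|c|^2\leq \min(a,b)\cdot h_x(v_0)\leq \tfrac{a+b}{2}h_x(v_0)$, which yields $\geq 0$ but with no slack.

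That last observation is exactly why your plan for part (2) cannot work. The ordinary Cauchy--Schwarz bound is \emph{sharp}, so $i\ddb\phi_0$ could in principle vanish while $|\d\phi_0|^2$ does not. You try to rescue this by claiming $f=h_x(v_0)$ is bounded (``$1/f\asymp 1$''), but this is false: $\phi_0=\log f$ is an exhaustion function, so $f\to\infty$ at the boundary of $D$ --- indeed the very ``$0\in\Xi$'' argument you cite from Proposition~\ref{comparison} shows $f$ grows in every direction. The ratio $h_x(v_0)/h_x(v_x)=h_x(v_0)$ is therefore unbounded, and the bookkeeping you sketch cannot close. The paper's route is genuinely different: it proves a \emph{strengthened} Cauchy--Schwarz inequality (Lemma~\ref{CS}) of the form
\[
h_x(w)\cdot\tfrac{1}{2}\bigl(h_x(Xw)+h_x(\bar Xw)\bigr)\;\geq\;(1+\beta)\,|h_x(Xw,w)|^2
\]
for a uniform $\beta>0$, valid for any $X\in\g_x^{-1,1}$. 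The proof exploits the Hodge decomposition $w=\sum w^{i,n-i}$ at $x$ and the fact that Griffiths transversality forces $X$ to shift the grading by exactly one; a careful weighting of the resulting cross-terms (choosing numbers $r_i,s_i$ with $r_i+s_{i+1}=1$) produces the $(1+\beta)$ improvement. This is the essential new ingredient, and it is what your proposal is missing.
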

In the statement of the proposition, the notations $O_{\trans}(\cdot)$ and $\geq_{\trans}$ mean the bound holds in Griffiths transverse tangent directions.
\begin{proof}
By definition, $\omega_x(X,\bar X) \sim h_x(X)$.  For horizontal $X$, $\tr(X\bar X)\sim h_x(X)$ is larger than the maximum eigenvalue of $X^*h_x$ with respect to $h_x$.  For $X\in \g^-$ pure, by Lemma \ref{derivatives} we have
 \[i\ddb\phi_{0}(X,\bar X)=2\left (\frac{h_x(Xv_0)}{h_x(v_0)}+\frac{h_x(\bar X v_0)}{h_x(v_0)}\right)-\left|\frac{h_x(Xv_0,v_0)+h_x(v_0,\bar X v_0)}{h_x(v_0)}\right|^2\] 
which is nonnegative by the triangle inequality and bounded by the maximal eigenvalue of $X^*h_x$ with respect to $h_x$, so (1) follows.  

The second claim follows by Lemma \ref{derivatives} and the following lemma:
\begin{lemma}\label{CS}There is a $\beta>0$ (only depending on $D$) such that for any $x\in D$, $w\in H_\C$, and $X\in\g_x^{-1,1}$,
\[h_x(w)\cdot \frac{h_x(Xw)+h_x(\bar X w)}{2}\geq (1+\beta)\left|h_x(Xw,w)\right|^2.\]
\end{lemma}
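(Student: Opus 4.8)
The plan is to reduce to a statement about a single pair of subspaces of a Hilbert space and then invoke compactness of $D$ modulo $\G(\R)$, or rather a continuity/limiting argument, to produce the uniform $\beta$. First I would fix $x\in D$, and work entirely with the Hodge metric $h_x$, with respect to which the horizontal (hence Griffiths transverse) operator $X\in\g_x^{-1,1}$ is conjugate self-adjoint: $h_x(Xw,v)=h_x(w,\bar X v)$. Writing $a:=h_x(Xw,w)$, I would note that $a+\bar a=h_x(Xw,w)+h_x(w,\bar X w)=\d h_x(w)(X)$ up to sign, so the right-hand side $|h_x(Xw,w)|^2$ is controlled once we understand $h_x(Xw)$ and $h_x(\bar X w)$. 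The inequality to prove is equivalent, after normalizing $h_x(w)=1$, to
\[
\frac{h_x(Xw)+h_x(\bar Xw)}{2}\ \geq\ (1+\beta)\,|h_x(Xw,w)|^2 .
\]
The Cauchy--Schwarz inequality already gives $|h_x(Xw,w)|^2\le h_x(Xw)$, and likewise with $\bar X$, so the content is that the slack in Cauchy--Schwarz can be bounded \emph{below} by a uniform multiple of $h_x(Xw)$, i.e. that $Xw$ (and $\bar Xw$) cannot be too close to a scalar multiple of $w$.

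The key geometric input is that $X\in\g_x^{-1,1}$ shifts the Hodge grading: it maps $H^{p,q}_x$ into $H^{p-1,q+1}_x$. Decompose $w=\sum_p w_p$ with $w_p\in H^{p,q}_x$ (these pieces are $h_x$-orthogonal, as the Hodge decomposition is $h_x$-orthogonal). Then $Xw=\sum_p Xw_p$ with $Xw_p\in H^{p-1,q+1}_x$, so $h_x(Xw,w)=\sum_p h_x(Xw_p,w_{p-1})$; each term is a genuine off-diagonal pairing. Applying Cauchy--Schwarz \emph{componentwise}, $|h_x(Xw,w)|\le \sum_p h_x(Xw_p)^{1/2} h_x(w_{p-1})^{1/2}$, and a second Cauchy--Schwarz (over the index $p$) gives $|h_x(Xw,w)|^2\le h_x(Xw)\cdot\bigl(\sum_{p}h_x(w_{p-1})\bigr)$, but the sum $\sum_p h_x(w_{p-1})$ only equals $h_x(w)=1$ if the range of the relevant indices is unbounded — and it is bounded by the weight. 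The precise statement is that if the weight (equivalently the length of the Hodge decomposition) is at most $k$, then there is necessarily a uniform loss: I would make this quantitative by observing that the symmetrized quantity $\tfrac12(h_x(Xw)+h_x(\bar Xw))$ splits over the grading and "spreads out'' the mass of $w$ across adjacent graded pieces, whereas $|h_x(Xw,w)|^2$ is a single quadratic form that, for grading reasons, can be supported on at most a proportional fraction of that mass. Concretely, after normalizing and using the shift-by-one structure, one gets an inequality of the form $\tfrac12(h_x(Xw)+h_x(\bar X w))\ge \tfrac{1}{c}\,h_x(Xw)$ for a constant $c$ depending only on the number of graded pieces, hence only on $D$, from which $(1+\beta)=c/(c-1)$ (or a similar explicit constant) works — this is exactly the "can't have all mass on one side'' phenomenon.

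The cleanest way to run this rigorously, and the step I expect to be the main obstacle, is getting the uniformity of $\beta$ across all $x\in D$ and all $w, X$ correct. Since $\G(\R)$ acts transitively on $D$ and $g^*h_x(w)=h_{gx}(g w)$ (and $\Ad(g)\g_x^{-1,1}=\g_{gx}^{-1,1}$ for $g$ preserving the relevant flag up to $K$), the expression is $\G(\R)$-equivariant, so it suffices to fix a basepoint $x=x_0$ and prove the bound there, with $\beta$ then automatically independent of $x$. At $x_0$ everything takes place in the fixed finite-dimensional vector space $H_\C$ with fixed Hodge decomposition, and we are minimizing the continuous, scale-invariant ratio
\[
\frac{h_{x_0}(w)\cdot\bigl(h_{x_0}(Xw)+h_{x_0}(\bar Xw)\bigr)/2}{|h_{x_0}(Xw,w)|^2}
\]
over the set where $h_{x_0}(Xw,w)\neq 0$. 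By scaling we may take $h_{x_0}(w)=1$ and $\|X\|=1$ (in the Hodge norm on $\g_{x_0}^{-1,1}$), a compact set; the only issue is the denominator vanishing, and one must check that as $h_{x_0}(Xw,w)\to 0$ the ratio does not tend to $1$ (it tends to $+\infty$, since the numerator stays bounded below away from configurations where $Xw=0$, which force $h_{x_0}(Xw,w)=0$ too). So the minimum over the compact set is attained and, by the componentwise Cauchy--Schwarz discussion above, is strictly greater than $1$; call it $1+\beta$. The delicate point to verify is precisely that the infimum is not approached only "at infinity'' in a way that pushes $\beta$ to $0$ — equivalently, that the strict inequality in the grading-shift Cauchy--Schwarz is uniform — which is why I would present the explicit grading computation rather than a pure compactness argument, using it either as the proof itself or as the verification that the compactness argument's infimum exceeds $1$.
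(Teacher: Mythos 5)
Your sketch identifies the right raw materials — the $h_x$-orthogonal Hodge decomposition $w=\sum_p w_p$, the fact that $X\in\g_x^{-1,1}$ shifts the grading by one, componentwise Cauchy--Schwarz, and $\G(\R)$-equivariance to reduce to a single basepoint — but there is a genuine gap in the part of the argument that is supposed to produce a uniform $\beta>0$, and neither of your two proposed routes closes it.

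\emph{The grading computation as sketched has no gain.} After componentwise Cauchy--Schwarz and a second Cauchy--Schwarz over $p$, you arrive at $|h_x(Xw,w)|^2\le h_x(Xw)\cdot\sum_p h_x(w_{p-1})$, and then you assert that $\sum_p h_x(w_{p-1})$ is strictly less than $h_x(w)$ because the range of indices is bounded. But the deficit is exactly $h_x(w_{\mathrm{top}})$, which can be zero or arbitrarily small; the inequality degenerates to the trivial Cauchy--Schwarz bound with no uniform slack. The essential ingredient you are missing is the \emph{dual} estimate: for each $i$, one has both $|h_x(Xw^{i+1,n-i-1},w^{i,n-i})|\le a_ib_i$ \emph{and} $|h_x(Xw^{i+1,n-i-1},w^{i,n-i})|=|h_x(w^{i+1,n-i-1},\bar Xw^{i,n-i})|\le a_{i+1}c_{i+1}$ (in the paper's notation $a_i^2=h_x(w^{i,n-i})$, $b_{i-1}^2=h_x(Xw^{i,n-i})$, $c_{i+1}^2=h_x(\bar Xw^{i,n-i})$), hence $\le\min(a_ib_i,a_{i+1}c_{i+1})\le r_ia_ib_i+s_{i+1}a_{i+1}c_{i+1}$ for any $r_i,s_{i+1}\ge 0$ with $r_i+s_{i+1}=1$. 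The whole difficulty is that choosing $r_i=s_i=1/2$ gives only the borderline constant $2$; the paper's proof threads the needle by a recursive choice $r_0=\tfrac12+\delta_0$, $r_i+s_{i+1}=1$, $\delta_{j+1}\ll\delta_j$ so that each quadratic form $b_i^2+c_i^2-(2+\delta)(r_ib_i+s_ic_i)^2$ is positive (semi-)definite. A naive half-and-half symmetrization can be checked to return the constant $2$ exactly, so the asymmetric recursive choice is not cosmetic; it is the point.

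\emph{The compactness argument does not avoid this.} Reducing to a basepoint and normalizing $h_{x_0}(w)=1$, $\|X\|=1$ does yield a compact set, and you correctly observe that at any nondegenerate point the ratio is $>1$ (Cauchy--Schwarz cannot be tight when $X$ strictly lowers the grading, since $Xw=\lambda w$ forces $\lambda=0$). But the infimum could a priori be approached along the degenerate locus $\{h_{x_0}(Xw,w)=0\}$, in particular along sequences with $Xw\to 0$ \emph{and} $\bar Xw\to 0$ simultaneously, where the ratio is $0/0$. Your claim that the numerator ``stays bounded below away from configurations where $Xw=0$'' ignores precisely this case. Ruling out a limiting value of $1$ there is equivalent to the quantitative statement you were trying to bypass, so the compactness argument does not supply $\beta$ for free — one still needs the explicit inequality, and that is where the paper's $r_i,s_i$ device is irreplaceable.
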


\begin{proof}

Let $w=\sum_i w^{i,n-i}$ be the decomposition into Hodge components at $x$, so that we have Hodge decompositions $Xw=\sum_i Xw^{i,n-i}$, $\bar Xw=\sum_i \bar Xw^{i,n-i}$.  

Now let
$$ a_i^2=h_x(w^{i,n-i}),\quad b_{i-1}^2=h_x(Xw^{i,n-i}), \quad c_{i+1}^2=h_x(\bar X w^{i,n-i}),$$ and we'll also set $b_n=c_0=0$.  Note that since $X$ and $\bar X$ are adjoint we have 

$$ h_x(Xw,w)=\sum_i h_x(Xw^{i+1,n-i-1},w^{i,n-i})$$ and $$|h_x(Xw^{i+1,n-i-1},w^{i,n-i})|=|h_x(w^{i+1,n-i-1},\bar Xw^{i,n-i})|\leq \min(a_ib_i,a_{i+1}c_{i+1}).$$
Thus it is sufficient to show that $$ \left(\sum_{i=0}^n a_i^2\right)\left(\sum_{i=0}^{n-1} b_i^2 + \sum_{i=1}^n c_i^2 \right)\geq (2 + \delta)\left(\sum_{i=0}^n a_i(r_ib_i+s_ic_i) \right)^2$$ for some choice 
of nonnegative $r_i,s_i$ with $r_i+s_{i+1}=1$ for $0\leq i\leq n-1$. By the Cauchy--Schwartz inequality, the left-hand side is greater than or equal to $\left(\sum_{i=0}^na_i\sqrt{b_i^2+c_i^2}\right)^2$.  Thus, it 
suffices to show for each $i$,
\[b_i^2+c_i^2\geq (2+\delta)\left(r_ib_i+s_ic_i\right)^2.\]

Note that $x^2+y^2 - 2(rx+sy)^2$ is positive definite if $(1-2r^2)(1-2s^2)>4r^2s^2$. 

\begin{lemma}

There exist non-negative real numbers $r_0,s_1,r_1,s_2,\dots,s_{n-1},r_{n-1},s_n$, with $r_i+s_{i+1}=1$ for $0\leq i\leq n-1$, $\max(r_0,s_n)<\frac{1}{\sqrt{2}}$, and
$(1-2r_i^2)(1-2s_i^2)>  4r_i^2s_i^2$ for all $1\leq i \leq n-1$.

\end{lemma}

\begin{proof}

Note that at $r_i=s_i=\frac12$ we get exact equality, in that $(1-2r_i^2)(1-2s_i^2)= 4r_i^2s_i^2$. Thus, we set $r_j=\frac12+\delta_j$, where $\delta_0=\frac19$ and $\delta_{j+1}$ is sufficiently
small in terms of $\delta_j$ to ensure $(1-2r_{j+1}^2)(1-2s_{j+1}^2) >  4r_j^2s_j^2$.  

\end{proof}

The statement now follows by picking the $r_i,s_i$ from the previous lemma, and setting $(2+\delta)$ to be the largest number such that $x^2+y^2 - (2+\delta)(r_ix+s_iy)^2$ is positive semi-definite for
$1\leq i\leq n-1$ and $1-(2+\delta)s_0^2$ is nonnegative.

%Now suppose that we can satisfy
% $b_i^2+c_i^2 \geq 2(r_ib_i+s_ic_i)^2$ 
% 
% we need  $$(1-2r_i^2)*(1-2s_i^2)\geq  r_is_i.

\end{proof}

\end{proof}

The previous proposition implies that the volume of Griffiths transverse subvarieties of $D$ grows at least exponentially in the radius:

\begin{prop} There is a constant $\beta>0$ such that for any $R>0$ and any positive-dimensional Griffiths transverse global analytic subvariety $Z\subset B^{\phi_0}(R)$, 
\[e^{-\beta r}\vol^{\phi_0}(Z\cap B^{\phi_0}(r))\]
is a nondecreasing function in $r\in[0,R]$.
\end{prop}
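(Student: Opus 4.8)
The plan is to mimic the classical monotonicity argument for the volume of minimal (here, Griffiths transverse) subvarieties inside a manifold equipped with a strictly plurisubharmonic exhaustion function, using $\phi_0$ as the potential. First I would set, for a Griffiths transverse analytic subvariety $Z$ and $r \in [0,R]$,
\[
A(r) := \vol^{\phi_0}(Z\cap B^{\phi_0}(r)) = \int_{Z\cap B^{\phi_0}(r)} i\ddb\phi_0,
\]
which is finite and nonnegative by Proposition \ref{formbounds}(1), since $i\ddb\phi_0 \geq_{\trans} 0$ and $Z$ is Griffiths transverse. The goal is to show $\frac{d}{dr}\log A(r) \geq \beta$ for a suitable $\beta>0$, which is exactly the statement that $e^{-\beta r}A(r)$ is nondecreasing.

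The key computation is the coarea/Stokes manipulation. On $Z$ we have the identity $i\ddb(\phi_0^2) = 2\phi_0\, i\ddb\phi_0 + 2i\,\d\phi_0\wedge\db\phi_0$ (as forms restricted to the smooth locus of $Z$; the singular locus is negligible). Integrating $i\ddb\phi_0$ over the sublevel set $\{\phi_0 < r\}\cap Z$ and applying Stokes, the derivative $A'(r)$ is expressed, via the coarea formula for the function $\phi_0|_Z$, as an integral over the level set $\{\phi_0 = r\}\cap Z$ of $i\,\d\phi_0\wedge\db\phi_0$ divided by $|\d\phi_0|$ — up to the dimension-dependent combinatorial factors that appear when comparing $(i\ddb\phi_0)^{\dim Z}$-type quantities; here since we integrate the \emph{trace form} $i\ddb\phi_0$ rather than its top power, the bookkeeping is that of a single $\ddb$. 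The crucial input is Proposition \ref{formbounds}(2): $|\d\phi_0|^2 = O_{\trans}(i\ddb\phi_0)$, i.e.\ there is a constant $c>0$ with $|\d\phi_0|^2 \leq c\cdot(i\ddb\phi_0)$ in Griffiths transverse directions. This forces the "boundary term" controlling $A'(r)$ to dominate $\tfrac1c A(r)$ from below, after using Stokes on $\{\phi_0<r\}\cap Z$ to rewrite $\int_{\{\phi_0<r\}\cap Z} i\ddb\phi_0$ in terms of a boundary integral of $i\,\d^c\phi_0$ and then bounding that boundary integral using Cauchy--Schwarz against $\int i\,\d\phi_0\wedge\db\phi_0$ and $\int i\ddb\phi_0$ over the level set. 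Combining, $A'(r) \geq \beta A(r)$ with $\beta = 1/c$ (up to a harmless constant), which gives the result.

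The main obstacle I anticipate is making the Stokes/coarea argument rigorous in the presence of singularities of $Z$ and non-regular values of $\phi_0|_Z$: one wants to know that $A(r)$ is absolutely continuous (so that $A(r) = \int_0^r A'(s)\,ds$ and the differential inequality integrates), that $\phi_0|_Z$ is proper on $Z\cap B^{\phi_0}(R)$ (it is, since $B^{\phi_0}(R)$ is relatively compact in $D$ by the Griffiths--Schmid exhaustion property invoked in Proposition \ref{comparison}), and that the singular locus contributes nothing — this is standard (e.g.\ by resolution or by the Lelong-type estimates of Stolzenberg/Federer, or by the fact that $Z$ being global analytic and Griffiths transverse makes $i\ddb\phi_0|_Z$ a positive current of finite mass), but needs to be stated carefully. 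A secondary point is that $\phi_0$ is only plurisubharmonic, and strictly so only in Griffiths transverse directions and only after restricting to $Z$; since $Z$ is assumed Griffiths transverse, $i\ddb\phi_0|_Z \geq 0$ and the strict positivity at generic points of $Z$ (Lemma \ref{nonzero}) ensures $A(r)>0$ for $r$ past the first value where $Z$ meets a level set, so the logarithmic derivative is well-defined. Once these technical points are in place, the inequality $A'(r)\geq \beta A(r)$ follows formally from parts (1) and (2) of Proposition \ref{formbounds}.
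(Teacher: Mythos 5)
Your proposal takes a genuinely different route from the paper, and the route as sketched has a real gap. The paper does not differentiate $A(r)$ or invoke coarea at all: it introduces the auxiliary function $\psi_0 = -e^{-\beta\phi_0}$, observes via Proposition \ref{formbounds}(2) that $i\ddb\psi_0 = \beta e^{-\beta\phi_0}\bigl(i\ddb\phi_0 - \beta|\d\phi_0|^2\bigr)$ is nonnegative in Griffiths transverse directions (so $\psi_0|_Z$ is plurisubharmonic), and then applies Stokes' theorem iteratively to peel off one factor of $i\ddb\phi_0$ at a time and replace it with $\beta^{-1}e^{\beta r}\,i\ddb\psi_0$, ending with the identity $\vol^{\phi_0}(Z\cap B^{\phi_0}(r)) = \beta^{-d}e^{\beta d r}\int_{Z\cap B^{\phi_0}(r)}(i\ddb\psi_0)^d$ for $d=\dim Z$. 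The right-hand integral is nondecreasing simply because $(i\ddb\psi_0)^d$ is a nonnegative measure on $Z$, and the exponential prefactor gives exactly the claimed monotonicity (in fact with exponent $\beta d \geq \beta$).

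The gap in your sketch is the remark that you "integrate the trace form $i\ddb\phi_0$ rather than its top power," which does not make sense for $\dim Z > 1$: one cannot integrate a $(1,1)$-form over a $d$-dimensional complex variety and get a volume. Despite the abbreviated notation in the paper's definition of $\vol^{\phi_0}$, the proof (and indeed the whole multiplicity-to-volume argument) requires $\int_Z(i\ddb\phi_0)^d$, and once you use the correct top power your coarea/Stokes plan becomes substantially more delicate: the Stokes boundary term is $\int_{Z\cap\{\phi_0=r\}} d^c\phi_0\wedge(i\ddb\phi_0)^{d-1}$, the coarea expression for $A'(r)$ involves dividing the $(i\ddb\phi_0)^d$-density by $|\nabla\phi_0|$, and comparing these two requires a pointwise matrix inequality that is no longer just the scalar bound $|\d\phi_0|^2 = O_{\trans}(i\ddb\phi_0)$ — it involves the full $(i\ddb\phi_0)^{d-1}$ and the splitting of directions along versus transverse to the level set. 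Your argument closes cleanly for $d=1$ (where it reduces to exactly the estimate $i\ddb\phi_0 \geq \beta\, i\d\phi_0\wedge\db\phi_0$), but you have not shown it closes in general, and this is precisely the difficulty the paper's $\psi_0$ trick is designed to bypass: the exponentiation converts the differential inequality into an exact algebraic identity between wedge powers, with the positivity of $(i\ddb\psi_0)^d$ absorbing all the higher-dimensional bookkeeping. I would recommend abandoning the direct coarea route in favor of the $\psi_0$ substitution, or at least carrying out the pointwise estimate for general $d$ before asserting the conclusion.
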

\begin{proof}  Let $\psi_0 =-e^{-\beta\phi_0}$ for $\beta$ the constant from Lemma \ref{CS}.  We have
\[i\ddb \psi_0=\beta e^{-\beta\phi_0}\left(i\ddb \phi_0-\beta|\d\phi_0|^2\right)\]
which is nonnegative in Griffiths transverse directions by the proof of Proposition \ref{formbounds}(ii).  By Stokes' theorem we have
 \begin{align*}
 \vol^\phi (Z\cap B^{\phi_{0}}(r))&= \int_{Z\cap B^{\phi_{0}}(r)}(i\ddb \phi_0)^d\\
 &=\int_{Z\cap\partial B^{\phi_{0}}(r)}d^c \phi_0\wedge (i\ddb \phi_0)^{d-1} \\
 &=\beta^{-1}e^{\beta r}\int_{Z\cap\partial B^{\phi_{0}}(r)}d^c \psi_0\wedge (i\ddb \phi_0)^{d-1}\\ 
 &=\beta^{-1}e^{\beta r}\int_{Z\cap B^{\phi_{0}}(r)}i\partial\bar\partial \psi_0\wedge (i\ddb \phi_0)^{d-1}\\
 &=\beta^{-d}e^{\beta dr}  \int_{Z\cap B^{\phi_{0}}(r)}(i\ddb \psi_0)^d
 \end{align*}
 which implies the claim, as $\psi_0|_Z$ is plurisubharmonic.
\end{proof}

\begin{proof}[Proof of Theorem \ref{volumebound}]  Choose a fixed euclidean ball $B$ centered around $x_0$.  By a classical result Federer (see for example \cite{Stolzenberg}), we have an inequality of the form $\vol^{\mathrm{eucl}}(Z\cap B)\gg \mult_{x_0}Z$.  Choose a fixed radius $r_0$ such that $B\subset B^{\phi_0}(r_0)$.  After possibly shrinking $B$, $i\ddb\phi_0$ is comparable to the euclidean K\"ahler form on $B$ in Griffiths transverse directions by Lemma \ref{nonzero}, and combining this with the above proposition we have 
$$\vol^{\phi_0}(Z\cap B^{\phi_0}(r))\gg e^{\beta r}\vol^{\phi_0}(Z\cap B^{\phi_0}(r_0))\gg e^{\beta r}\mult_{x_0}Z$$
for $r>r_0$.  By Proposition \ref{comparison}, the balls $B^{\phi_{0}}(r)$ are comparable to the balls $B^\horiz_{x_0}(r)$, which are in turn comparable to the balls $B_{x_0}(r)$ with respect to a left-invariant metric on $D$ for $r\gg0$, so we obtain the bound in Theorem \ref{volumebound}.

\end{proof}
\section{Definable fundamental sets}\label{definable}

Throughout the following, by definable we mean definable with respect to the o-minimal structure $\R_{\mathrm{an,exp}}$.  Let $X$ be a smooth algebraic variety supporting a pure polarized integral variation of Hodge structures $\mathscr{H}_\Z$, and let $(\bar X,E)$ be a proper log-smooth compactification of $X$.  For simplicity we may assume that $\mathscr{H}_\Z$ has unipotent local monodromy and that the associated period map $\phi:X\to \Gamma\backslash D$ is proper, although the argument carries through without making these assumptions.  We may also assume that the monodromy $\Gamma$ is torsion-free.

The structure of $X$ as an algebraic variety canonically endows it with the structure of a definable manifold, and the choice of  compactification $(\bar X,E)$ allows us to choose a definable atlas of $X$ of finitely many polydisks $\Delta^k\times(\Delta^*)^\ell$.  Note that any polydisk chart $P$ in such an atlas $\{P_i\}$ can be shrunk to yield a new such atlas, as the complement of $\bigcup_{P_i\neq P}P_i$ is contained in $P$ and has compact closure in the interior closure of $P$ in $\bar X$.  Let
\[\exp:\Delta^k\times\HH^\ell\to \Delta^k\times(\Delta^*)^\ell\] 
be the standard universal cover, and choose a bounded vertical strip $\Sigma\subset \HH$ such that $\Delta^k\times\Sigma^\ell$ is a fundamental set for the action of covering transformations.  By the above remark, by shrinking a polydisk we may always restrict to a region in $\Delta^k\times\Sigma^\ell$ where $|z_i|$ is bounded away from $1$ on the $\Delta$ factors and $\Imag z_i$ is bounded away from $0$ on the $\Sigma$ factors.  

Choose lifts $\tilde\phi:\Delta^k\times\HH^\ell\to D $ of the period map restricted to each chart, and let $\mathcal{F}$ be the disjoint union of $\Delta^k\times \Sigma^\ell$ over all charts.  We then have a diagram
\begin{equation}
\xymatrix{
\mathcal{F}\ar[r]^{\tilde\phi}\ar[d]_\exp& D \\
X&}\label{diagram}\end{equation}
and $\mathcal{F}$ has a natural definable structure.

Note that the embedding $ D \subset\check{ D }$ gives $ D $ a canonical definable structure.
\begin{lemma}\label{definablelift} Both maps in \eqref{diagram} are definable.  
\end{lemma}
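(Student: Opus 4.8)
The plan is to prove definability of the two maps in the diagram \eqref{diagram} separately, with the covering map $\exp$ being essentially routine and the period map lift $\tilde\phi$ being the substantive part.

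For the covering map $\exp:\Delta^k\times\HH^\ell\to\Delta^k\times(\Delta^*)^\ell$, restricted to the fundamental set $\mathcal{F}=\bigsqcup\Delta^k\times\Sigma^\ell$, the claim is immediate: on each chart the map is the identity on the $\Delta^k$ factors and is $z\mapsto e^{2\pi i z}$ on each $\Sigma$ factor. Since $\Sigma$ is a bounded vertical strip with $\Imag z$ bounded away from $0$ (as arranged by the shrinking procedure), the exponential $e^{2\pi i z}=e^{-2\pi\Imag z}(\cos 2\pi\Real z+i\sin 2\pi\Real z)$ is a composition of the restricted real exponential and the restricted analytic trigonometric functions on a bounded domain, hence definable in $\R_{\mathrm{an,exp}}$; and $\check D\supset D$ gives the target its definable structure, which is the relevant one here on the $(\Delta^*)^\ell$ side via the algebraic structure of $X$.

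The main work is to show $\tilde\phi:\mathcal{F}\to D\subset\check D$ is definable. First I would reduce to a single polydisk chart $\Delta^k\times(\Delta^*)^\ell$, so that $\tilde\phi$ is the lift of a period map with unipotent monodromy. The key tool is Schmid's nilpotent orbit theorem: writing $N_1,\dots,N_\ell$ for the (commuting, nilpotent) monodromy logarithms, the lifted period map has the form
\[
\tilde\phi(z_1,\dots,z_\ell,w)=\exp\Bigl(\sum_j z_j N_j\Bigr)\cdot\Psi(e^{2\pi i z_1},\dots,e^{2\pi i z_\ell},w)
\]
where $\Psi$ extends holomorphically across the boundary divisors to a holomorphic map $\Delta^k\times\Delta^\ell\to\check D$. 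Because $\Psi$ is holomorphic on the closed polydisk after the shrinking step, its restriction to the relevant compact region is definable (it is a restricted analytic function with values in the projective variety $\check D$). The matrix exponential $\exp(\sum_j z_j N_j)$ is polynomial in the $z_j$ (as the $N_j$ are nilpotent), and on the bounded strip $\Sigma^\ell$ the real and imaginary parts of the $z_j$ range over bounded sets, so this factor is semialgebraic, hence definable. The action of $\G(\C)$ on $\check D$ is algebraic, so the composition/product defining $\tilde\phi$ is definable. This gives definability of $\tilde\phi$ on $\mathcal{F}$, and patching over the finitely many charts preserves definability.

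The step I expect to be the main obstacle is the clean deployment of Schmid's nilpotent orbit theorem to get a \emph{globally} well-behaved $\Psi$ on the whole (shrunk) polydisk with values in $\check D$ — in particular, ensuring that after the shrinking procedure described before the lemma, $\Psi$ is genuinely the restriction of a holomorphic map on a neighborhood of the closure, so that restricted-analyticity (rather than merely local analyticity) applies. One must also be careful that the finitely many charts and their lifts $\tilde\phi$ can be chosen compatibly so that the disjoint-union definable structure on $\mathcal{F}$ is the correct one; this is where the remark about shrinking polydisks (so that the complement of the other charts has compact closure in the interior) is used. Everything else — the polynomiality of $\exp(\sum z_j N_j)$, the algebraicity of the $\G(\C)$-action on $\check D$, and definability of $e^{2\pi i z}$ on bounded strips — is routine once this structural input is in place.
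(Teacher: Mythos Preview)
Your proposal is correct and follows essentially the same approach as the paper: the vertical map is routine, and for $\tilde\phi$ one invokes Schmid's nilpotent orbit theorem to factor $\tilde\phi=\exp(\sum_j z_jN_j)\cdot(\psi\circ\exp)$ with $\psi$ extendable across the boundary, then uses polynomiality of the nilpotent exponential and algebraicity of the $\G$-action on $\check D$. One small slip: on $\Sigma$ the imaginary part of $z$ is \emph{not} bounded (only $\Real z$ is, and $\Imag z$ is bounded away from $0$), but this is harmless since a polynomial map is semialgebraic on any semialgebraic domain, bounded or not.
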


\begin{proof}  The claim for the vertical map is obvious.  By the nilpotent orbit theorem, for each polydisk $\tilde\phi=e^{zN}\tilde\psi$ where $\tilde\psi=\psi\circ\exp$ for some extendable holomorphic function $\psi:\Delta^{n}\to D $ (after shrinking the polydisks).  The action of $\G(\R)$ on $ D $ is definable, and $e^{z\cdot N}$ is polynomial in $z$, so $\tilde\phi:\Delta^k\times\Sigma^\ell\to D $ is definable. 
\end{proof}

Fix a left-invariant metric $h_ D $ on $ D $ and let $\Phi = \tilde\phi(\mathcal{F})$.

\begin{prop}\label{smallvolume}  Let $Z\subset \check D $ be algebraic.  For all $\gamma\in \G(\Z)$, $\vol(Z\cap\gamma\Phi)= O(1)$.
\end{prop}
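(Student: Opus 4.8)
The plan is to reduce this to the volume bound of Theorem~\ref{volumebound} together with the definability of $\Phi$. First I would observe that since $Z\subset \check D$ is algebraic and everything in sight is definable in $\R_{\mathrm{an,exp}}$ (the set $\Phi=\tilde\phi(\mathcal F)$ by Lemma~\ref{definablelift}, the $\G(\R)$-action on $\check D$, and $Z$ itself), the family $\{Z\cap\gamma\Phi\}_{\gamma\in\G(\Z)}$ is part of a single definable family: indeed $\G(\Z)$ is a definable subset of $\G(\R)$ and $\{(g,p): g\in\G(\R),\ p\in Z\cap g\Phi\}$ is definable. By the uniform finiteness / cell decomposition properties of o-minimal structures, one gets a uniform bound on the number of connected components of the fibers, and — this is the key point — a uniform bound on the volume of each fiber with respect to a \emph{definable} metric. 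However, $h_D$ is left-invariant and hence not definable on all of $D$, so I cannot directly invoke o-minimality for the volume. The resolution is to work inside $\check D$: equip $\check D$ with a fixed smooth metric (e.g. a Fubini--Study type metric from a projective embedding), with respect to which $\Phi$, being a definable subset with compact closure in $\check D$, has the property that $\vol_{\check D}(Z\cap\gamma\Phi)=O(1)$ uniformly in $\gamma$ by o-minimality. One then needs to compare the $h_D$-volume to this $\check D$-volume on the region $\gamma\Phi$.

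The comparison is where the real content lies, and I expect it to be the main obstacle. The left-invariant metric $h_D$ degenerates relative to the $\check D$-metric as one approaches $\partial D\subset\check D$, so a priori $\vol(Z\cap\gamma\Phi)$ could be much larger than $\vol_{\check D}(Z\cap\gamma\Phi)$. The escape is that $Z\cap\gamma\Phi$ is Griffiths transverse is \emph{not} assumed — so instead I would use that $\Phi$ is a fundamental set for $\Gamma$: translating by $\gamma^{-1}$, we have $\vol(Z\cap\gamma\Phi)=\vol(\gamma^{-1}Z\cap\Phi)$ since $h_D$ is left-invariant. Now $\gamma^{-1}Z$ is again algebraic in $\check D$ (of bounded degree, as $\gamma\in\G(\Z)$ acts by a linear automorphism of the ambient projective space — more precisely one must be slightly careful since $\G(\Z)$ is infinite, but the relevant point is only that $\gamma^{-1}Z$ is algebraic), and $\Phi$ is a \emph{fixed} definable set with compact closure in $\check D$. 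So it suffices to show: for a fixed definable $\Phi$ with $\bar\Phi\subset\check D$ having compact closure, and for any algebraic (equivalently any analytic with bounded local geometry) subvariety $A\subset\check D$, $\vol_{h_D}(A\cap\Phi)=O(1)$ with the implied constant independent of $A$.

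For that final statement I would argue as follows. The essential difficulty is uniformity over the infinitely many translates, but after the reduction above $\Phi$ is fixed, so I can choose a single compact set $\bar\Phi\subset D$ (the closure of $\Phi$ in $D$, which lies in $D$ because $\mathcal F\subset \Delta^k\times\Sigma^\ell$ was chosen with $|z_i|$ bounded away from $1$ and $\Imag z_i$ bounded away from $0$, and $\tilde\phi$ extends continuously by the nilpotent orbit theorem — so $\Phi$ has compact closure \emph{in $D$}, not just in $\check D$). On this fixed compact set $\bar\Phi\subset D$ the metrics $h_D$ and $h_{\check D}$ are uniformly comparable. Hence $\vol_{h_D}(A\cap\Phi)\le C\,\vol_{h_{\check D}}(A\cap\Phi)\le C\,\vol_{h_{\check D}}(A\cap\Phi')$ where $\Phi'$ is a fixed open definable neighborhood of $\bar\Phi$ in $\check D$; and this last quantity is $O(1)$ by the standard o-minimal volume bound for definable sets, \emph{or} alternatively by Bishop's theorem / Lelong's bound: the volume of an analytic subvariety of a compact Kähler manifold intersected with a fixed ball is bounded by the degree times the cohomology class, which for $A\subset\check D$ projective is a bounded cohomological intersection number. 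Wrapping up: combining $\vol(Z\cap\gamma\Phi)=\vol(\gamma^{-1}Z\cap\Phi)$, the comparability of metrics on the fixed compact $\bar\Phi$, and the cohomological/o-minimal bound on $\vol_{h_{\check D}}(\gamma^{-1}Z\cap\Phi')$, gives $\vol(Z\cap\gamma\Phi)=O(1)$ uniformly in $\gamma\in\G(\Z)$, as claimed. The one point demanding care is verifying that $\Phi$ genuinely has compact closure inside $D$ (and not merely $\check D$), which is exactly what the shrinking of polydisks in \S\ref{definable} and the nilpotent orbit theorem were arranged to guarantee.
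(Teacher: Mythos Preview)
The central claim in your argument --- that $\Phi$ has compact closure in $D$ --- is false, and the rest of the proof collapses without it.  The fundamental set $\mathcal F$ is a union of regions $\Delta^k\times\Sigma^\ell$ where $\Sigma\subset\HH$ is a vertical strip with $\Imag z$ bounded \emph{below} but not above; as $\Imag z_j\to\infty$ (equivalently, as we approach the boundary divisor $E\subset\bar X$) the lifted period map $\tilde\phi=e^{zN}\tilde\psi$ sends the point to the boundary of $D$ in $\check D$.  The nilpotent orbit theorem only says that the \emph{untwisted} map $\psi$ extends holomorphically across the puncture as a map to $\check D$, and that the nilpotent orbit $e^{zN}\cdot\psi(0)$ lies in $D$ for $\Imag z$ large; it does \emph{not} produce an extension of $\tilde\phi$ landing in $D$.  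So on $\Phi$ the left-invariant metric $h_D$ and any smooth metric on $\check D$ are incomparable, and neither the o-minimal nor the cohomological volume bound you invoke controls $\vol_{h_D}$.

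The paper circumvents this by never comparing to a metric on $\check D$.  Instead it pulls the problem back along $\tilde\phi$ to $\Delta^k\times\Sigma^\ell$ and uses two ingredients that your approach misses: (i) since the period map is horizontal, the Ahlfors--Schwarz lemma gives $\tilde\phi^*h_D\ll\kappa_M$ where $\kappa_M$ is the Kobayashi (= product Poincar\'e) metric on the polydisk, which \emph{is} the right degenerating metric on the noncompact $\mathcal F$; and (ii) after this reduction, bounding the Kobayashi volume of $\tilde\phi^{-1}(Z')\cap(\Delta^k\times\Sigma^\ell)$ amounts to bounding the degrees of its projections to coordinate subsets, and \emph{that} is a finiteness statement that follows from definable cell decomposition applied to the universal family over the Hilbert-scheme component of $Z$.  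The horizontality of $\tilde\phi$ is essential for (i) and is precisely what makes $h_D$ tame enough on $\Phi$ despite $\Phi$ being noncompact in $D$.
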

\begin{proof}Evidently it is enough to show $\vol(Z'\cap\Phi)= O(1)$ for all $Z'$ in the same connected component of the Hilbert scheme of $\check D $ as $Z$.  Further, it suffices to show $\vol(\tilde\phi^{-1}(Z')\cap\Delta^k\times\Sigma^\ell)= O(1)$ for each lifted polydisk chart $\tilde\phi:\Delta^k\times\HH^\ell\to D $, where the volume is computed with respect to $\tilde\phi^*h_ D $.

For any holomorphic horizontal map $f:M\to \Gamma\backslash D $ we have $f^*h_ D \ll \kappa_M$ where $\kappa_M$ is the Kobayashi metric of $M$.  In particular, for $M=\Delta^k\times\HH^\ell$ the metric $\kappa_M$ is the maximum over the coordinate-wise Poincar\'e metrics.  After shrinking the polydisk, the factors in $\Delta^k\times \Sigma^\ell$ have finite volume with respect to the Kobayashi metric of the larger polydisk, and thus it is enough to uniformly bound the degree of the projection of $\tilde\phi^{-1}(Z')$ to any subset of coordinates. 

By definable cell decomposition,  for any definable subset $L\subset\R^N$ and any coordinate projection $\R^N\to\R^M$, the number of connected components in the fibers of $L$ is bounded.  Applying this to the universal family of $\tilde\phi^{-1}(Z')\subset \Delta^k \times\Sigma^\ell$, the claim follows.

\end{proof}
\section{Heights}\label{heights}

Fix a basepoint $x_0\in \Phi$ so that we have an identification $ D \cong \G(\R)/V$ for a compact subgroup $V\subset \G(\R)$.  Thinking of $ D $ as a space of Hodge structures on the fixed integral lattice $(H_\Z,Q_\Z)$, as before we denote by $h_x$ the induced Hodge metric on $H_\C$ corresponding to $x\in D$.   

\begin{defn} For $\gamma\in \G(\Z)$ let $H(\gamma)$ be the height of $\gamma$ with respect to the representation $\rho_\Z:\G(\Z)\to \GL(H_\Z)$. For $g\in \G(\R)$, we denote by 
$||\rho_\R(g)||$ the maximum archimedean size of the entries of $\rho_\R(g)$, so that if $\gamma\in\G(\Z)$ we have $H(\gamma)=||\rho_\R(\gamma)||$.
\end{defn}

For any $R>0$ let $B_{x_0}(R)\subset D $ be the ball of radius $R$ centered at $x_0$.  The main goal of this section is to establish the following:
\begin{thm}\label{smallheight} For any $R>0$, every element $\gamma$ of
\[\{\gamma\in \G(\Z)\mid B_0(R)\cap \gamma^{-1}\Phi\neq \varnothing\}\]
has height $H(\gamma)=e^{O(R)}$.
\end{thm}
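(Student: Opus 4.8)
The plan is to estimate $H(\gamma) = \|\rho_\R(\gamma)\|$ by controlling, for a point $x \in B_{x_0}(R)\cap\gamma^{-1}\Phi$, the operator norm of $\rho_\R(\gamma)$ in terms of the Hodge norms $h_{x}$ and $h_{x_0}$. The key observation is that if $x \in \gamma^{-1}\Phi$ then $\gamma x \in \Phi$, and on $\Phi = \tilde\phi(\mathcal F)$ the Hodge norm is comparable (in a two-sided polynomial/exponential sense) to the fixed reference norm $h_{x_0}$: indeed on each polydisk chart $\tilde\phi = e^{zN}\tilde\psi$ with $\tilde\psi$ extending holomorphically to the compact closure, so on the restricted region $\Delta^k\times\Sigma^\ell$ the Hodge norm of a fixed integral basis vector differs from its value at $x_0$ by at most a power of $\sup_i \Imag z_i$; but the period map is assumed proper, so in fact this region has bounded image and $h_{\gamma x}$ is comparable to $h_{x_0}$ up to $O(1)$. (Without properness one uses the polynomial bounds of Schmid's nilpotent orbit theorem and the fact that $\Imag z_i = e^{O(R)}$, which suffices.) First I would make this comparison precise: there are constants so that $h_{x_0}(v) \ll h_{\gamma x}(v) \ll h_{x_0}(v)$ for all $v\in H_\R$ and all relevant $x$, $\gamma$.

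The second ingredient is a comparison between the geodesic distance and the Hodge norm: for $x \in B_{x_0}(R)$, the Weil operator $C_x$ satisfies $\|C_x\|_{h_{x_0}}, \|C_x^{-1}\|_{h_{x_0}} = e^{O(R)}$, hence $h_x(v) \asymp h_{x_0}(C_x^{1/2}v)$ forces $h_{x_0}(v) e^{-O(R)} \ll h_x(v) \ll e^{O(R)} h_{x_0}(v)$. This is the standard statement that the Hodge metric and a fixed reference metric on the tautological bundle differ by at most $e^{O(\mathrm{dist})}$; it follows because the displacement of $x$ from $x_0$ is realized by $\exp$ of a Lie algebra element of norm $O(R)$ acting on $H_\R$, and $\|\rho_\R(\exp Y)\| \le e^{\|\rho_\R(Y)\|} = e^{O(\|Y\|)}$. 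Combining this with the previous paragraph: $h_x(v)$ and $h_{\gamma x}(v)$ are both comparable to $h_{x_0}(v)$ up to factors $e^{O(R)}$.

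Now I would finish as follows. Write $\gamma = \rho_\R(\gamma)$ acting on $H_\R$; since $h_{\gamma x}(\gamma v) = h_x(v)$ (the Hodge metric is natural under the group action, $h_{gx}(gv) = h_x(v)$), we get for every $v$,
\[
h_{x_0}(\gamma v) \asymp_{e^{O(R)}} h_{\gamma x}(\gamma v) = h_x(v) \asymp_{e^{O(R)}} h_{x_0}(v),
\]
so $\gamma$ has operator norm $e^{O(R)}$ with respect to the \emph{positive definite} form $h_{x_0}$. Since $h_{x_0}$ is a fixed inner product on $H_\R$, comparability with the standard Euclidean norm on $\Z^{\dim H}$ is $O(1)$, and therefore the maximal entry $\|\rho_\R(\gamma)\|$ is $e^{O(R)}$, which is exactly $H(\gamma) = e^{O(R)}$.

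The main obstacle I expect is the first comparison — controlling the Hodge norm on the fundamental set $\Phi$ — precisely because $\Phi$ is noncompact (the $\Sigma^\ell$ directions go off to infinity). Under the simplifying assumption that $\phi$ is proper this is immediate, but for the genuinely necessary statement one must invoke Schmid's $\mathrm{SL}_2$-orbit theorem to see that along $\Delta^k\times\Sigma^\ell$ the Hodge norm of an integral vector grows at most polynomially in $\Imag z_i$, and then bound $\Imag z_i$ on the relevant region by $e^{O(R)}$ using that $\gamma x \in \Delta^k\times\Sigma^\ell$ lies at distance $O(R)$ from the basepoint. Matching the polynomial-in-height-coordinate growth from Schmid against the exponential-in-$R$ distance bound, rather than losing an extra exponential, is the delicate point; everything else is routine linear algebra with $\rho_\R$.
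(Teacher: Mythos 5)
Your overall approach is close to the paper's in spirit, but there is a genuine gap. In the display
\[
h_{x_0}(\gamma v) \asymp_{e^{O(R)}} h_{\gamma x}(\gamma v) = h_x(v) \asymp_{e^{O(R)}} h_{x_0}(v),
\]
the right-hand comparison is fine (it is essentially Lemma \ref{distanceheight}(2), since $x\in B_{x_0}(R)$), and the middle equality is the naturality of the Hodge metric; but the \emph{left-hand} comparison requires knowing $d_0(\gamma x) = O(R)$, equivalently that the coordinates $\Imag z_i$ of $\gamma x$ in $\Delta^k\times\Sigma^\ell$ are $e^{O(R)}$. You assert this (``using that $\gamma x\in\Delta^k\times\Sigma^\ell$ lies at distance $O(R)$ from the basepoint''), but that is circular: being in $\Phi$ gives no a priori bound on $d_0(\gamma x)$, since $\Phi$ is unbounded in $D$. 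Your earlier claim that properness of $\phi$ makes ``this region have bounded image'' is a misreading: properness of $\phi:X\to\Gamma\backslash D$ means preimages of compacts are compact, not that $\Phi=\tilde\phi(\mathcal{F})$ is bounded; $\Phi$ is unbounded as soon as the variation degenerates anywhere along $E$.

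What actually makes the transfer from $x$ to $\gamma x$ work in the paper is the $\G(\Z)$-invariance of the minimal period length $\mu(x)=\min_{v\in H_\Z\setminus\{0\}} h_x(v)$: since $\gamma$ preserves the lattice $H_\Z$ one has $\mu(\gamma x)=\mu(x)$. Combined with the estimate $-\log\mu \ll d_0 + O(1)$ valid on all of $D$ (Corollary \ref{bound1}, which your second ingredient essentially contains) and the \emph{reverse} estimate $d_0 \ll -\log\mu + O(1)$ valid only on $\Phi$ (Lemma \ref{bound2}, via the multi-graded Hodge-norm asymptotics of \cite{hodgeasymp} and the observation that the weight set is symmetric under negation), one gets
$d_0(\gamma x)\ll -\log\mu(\gamma x)+O(1)=-\log\mu(x)+O(1)\ll d_0(x)+O(1)\ll R$,
which is exactly the bound you are missing; the triangle inequality then yields $d_0(\gamma x_0)\ll R+O(1)$ and the norm--distance comparison (your last paragraph of linear algebra, which is sound and matches Lemma \ref{distanceheight}(1)) finishes. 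Without introducing some lattice invariant like $\mu$, Schmid's polynomial bounds for Hodge norms of integral vectors alone do not tell you where in the (noncompact) set $\Phi$ the point $\gamma x$ lands, and the argument does not close.
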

Define $d_0(x)=d(x,x_0)$.  We write $f\preceq g$ if $|f|\ll |g|^{O(1)}+O(1)$, and $f\asymp g$ if $f\preceq g$ and $g\preceq f$.
\begin{lemma}\label{distanceheight}Let $\lambda(x,x')$ be the maximal eigenvalue of $h_x$ with respect to $h_{x'}$.  Then
\begin{enumerate}
\item For all $g\in \G(\R)$ we have $||\rho_\R(g)||\asymp e^{d_0(gx_0)}$;
\item $\lambda(x,x')\asymp e^{d(x,x')}$.
\end{enumerate}
\end{lemma}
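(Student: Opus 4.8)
The plan is to prove the two comparisons in Lemma~\ref{distanceheight} by reducing everything to the $KAK$ (Cartan) decomposition and a weight-space analysis, exactly in the spirit of Proposition~\ref{comparison}. For (1), write $g = k_1 a k_2$ with $k_1,k_2 \in V$ (the maximal compact stabilizing $x_0$) and $a = \exp(\sum_i t_i T_i)$ in the closed positive Weyl chamber of an $\R$-split torus $A$. Since $\rho_\R(V)$ preserves the Hodge metric $h_{x_0}$ up to bounded distortion — indeed $V$ is compact, so $\|\rho_\R(k)\|$ is bounded above and below — we have $\|\rho_\R(g)\| \asymp \|\rho_\R(a)\|$. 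Decomposing $H_\C$ into $\frak{a}$-weight spaces $H_\C = \bigoplus_\alpha H_\alpha$, the operator $\rho_\R(a)$ acts on $H_\alpha$ by $e^{\alpha(\sum t_i T_i)}$, so $\|\rho_\R(a)\| \asymp e^{\max_\alpha \alpha(\sum t_i T_i)}$. On the other hand $d_0(gx_0) = d_0(ax_0)$, and as in the proof of Proposition~\ref{comparison} (using \eqref{dist} and the fact that $Ax_0$ is totally geodesic with a Euclidean metric in exponential coordinates, together with the convex-hull/exhaustion argument showing $0$ lies in the interior of the weight polytope) this geodesic distance is comparable to $\max_\alpha |\alpha(\sum t_i T_i)|$. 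Matching the two gives $\|\rho_\R(g)\| \asymp e^{d_0(gx_0)}$; note the $\asymp$ (rather than $\sim$) absorbs the finitely many weights and the compact factors.

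For (2), I would first reduce to the case $x' = x_0$ by $\G(\R)$-invariance: choosing $g$ with $gx_0 = x'$, we have $h_{x'} = (g^{-1})^* h_{x_0}$ in the appropriate sense, and $d(x,x') = d(g^{-1}x, x_0)$, while $\lambda(x,x')$ — the maximal eigenvalue of $h_x$ with respect to $h_{x'}$ — equals the maximal eigenvalue of $h_{g^{-1}x}$ with respect to $h_{x_0}$. So it suffices to show $\lambda(x,x_0) \asymp e^{d_0(x)}$. Writing $x = g x_0$ and again using $KAK$, $\lambda(gx_0, x_0)$ is unchanged if we replace $g$ by its $A$-part $a$ (the left $K$-factor is irrelevant since $\lambda$ compares $h_x$ to the fixed $h_{x_0}$ and acts by an $h_{x_0}$-isometry; the right $K$-factor fixes $x_0$). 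For $a = \exp(T)$ with $T \in \frak{a}$, the Hodge form $h_{ax_0}$ is $h_{x_0}$ conjugated by $\exp(T)$, and since $\frak{a}$ is odd hence self-adjoint with respect to $h_{x_0}$ (as noted before Proposition~\ref{comparison}), the $\frak{a}$-weight decomposition of $H_\C$ is $h_{x_0}$-orthogonal and $h_{ax_0}$ is diagonal in it with eigenvalue $e^{2\alpha(T)}$ on $H_\alpha$. Hence $\lambda(ax_0,x_0) = e^{2\max_\alpha \alpha(T)}$, which is $\asymp e^{d_0(ax_0)}$ by the same distance estimate used in part (1).

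The main obstacle, as I see it, is the lower bound direction — showing $d_0(gx_0) \preceq \log\|\rho_\R(g)\|$, equivalently that the distance does not grow faster than the largest weight. This is exactly where one needs that $0$ is in the \emph{interior} (or at least not a vertex in a degenerate way) of the convex hull $\Xi$ of the weights occurring in the relevant vector, so that $\max_\alpha |\alpha(T)| \asymp \max_\alpha \alpha(T)$ along the positive chamber; the Griffiths--Schmid exhaustion argument quoted in Proposition~\ref{comparison} supplies this, but one must be slightly careful that the weights relevant to $\|\rho_\R(\cdot)\|$ (weights on $H_\C$) versus those relevant to $\phi_0$ (weights on $\bigwedge^{\dim D_W}\g_\C$) both have this property — this follows since the adjoint action's weights are differences of the $H_\C$-weights and the norm is controlled by the top exterior power in either case. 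Once this symmetry of weights is in hand, all four inequalities follow by the elementary estimate $\max_i(e^{t_i}+e^{-t_i}) \asymp e^{\max_i|t_i|} \asymp e^{(\sum_i t_i^2)^{1/2}}$ combined with \eqref{dist}.
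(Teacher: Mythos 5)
Your plan for part (1) is essentially the paper's, but there is a genuine error in the setup that propagates through both parts: you take the $KAK$ decomposition with $k_1,k_2\in V$, calling $V$ ``the maximal compact stabilizing $x_0$.'' But $V=\Stab_{\G(\R)}(x_0)$ is \emph{not} a maximal compact subgroup in general --- $D$ is a Mumford--Tate domain, not a symmetric space, and $V\subsetneq K$ for a maximal compact $K$. Consequently $\G(\R)\neq VAV$; the Cartan decomposition only holds with $K$ maximal compact. The paper handles this by choosing $K\supset V$ maximal compact and observing that the fibers of $\G(\R)/V\to\G(\R)/K$ have uniformly bounded diameter, so $d_0(gx_0)=d_0(ax_0)+O(1)$ even though $k_2x_0\neq x_0$. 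Once you make that substitution your part (1) goes through (and your worry about which weight sets control $\|\rho_\R(\cdot)\|$ versus $\phi_0$ is well-placed but not an obstacle: the argument only needs the weights of $H_\C$, which are symmetric under negation because $Q_\Z$ makes the representation self-dual, and span $\frak a^*$ because $\rho$ is faithful).

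For part (2) you have the same $V$-vs-$K$ problem: with $K$ maximal compact the right factor $k_2$ does \emph{not} fix $x_0$. What is true, and what you should invoke instead, is that $K$ is the unitary group of $h_{x_0}$, so $k_2$ preserves $h_{x_0}$; that is enough to reduce $\lambda(gx_0,x_0)$ to $\lambda(ax_0,x_0)$. With that repair your weight-space computation of $\lambda(ax_0,x_0)$ is correct. However the paper takes a shorter route here that you might prefer: rather than re-running the weight analysis, note that $\Tr(\rho(g)^*\rho(g))$ (adjoint with respect to $h_{x_0}$) is, up to multiplicative constants depending only on $\dim H$, the maximal eigenvalue of $h_{gx_0}$ with respect to $h_{x_0}$ (using the negation-symmetry of weights to identify eigenvalues with their reciprocals), while on the other hand it is the Hilbert--Schmidt norm squared, hence $\asymp\|\rho_\R(g)\|^2$. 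This reduces (2) directly to (1) without a second $KAK$ analysis, which is both shorter and avoids having to re-verify that the $\frak a$-weight decomposition of $H_\C$ is $h_{x_0}$-orthogonal.
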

\begin{proof}Choose a maximal compact subgroup $K\subset \G(\R)$ containing $V$ and a left-invariant metric on the associated symmetric space $\G(\R)/K$.  Note that the diameters of the fibers of $\G(\R)/V\to \G(\R)/K$ are bounded.  Choosing a split maximal torus $A\subset \G(\R)$ and a basis $A_i$ of the Lie algebra $\mathfrak{a}$ of $A$, we have for any $g\in \G(\R)$ with $KAK$ decomposition $g=k_1a k_2$
\[\sqrt{\sum_i t_i^2}\ll d_0(gx_0)=d_0(ax_0)+O(1)\ll\sqrt{\sum_i t_i^2}+O(1)\]
where $a=\exp(\sum_it_iA_i)$. As 
$$\max_i \exp(|t_i|)\preceq\rho_\R(g)\preceq \max_i \exp(|t_i|)$$
part (1) follows.

For part (2), note that by $\G(\R)$-invariance we may restrict to the case $x'=x_0$.   Setting $\rho=\rho_\R$ for convenience, note that $\tr(\rho(g)^*\rho(g))$ is a sum of the eigenvalues of $h_{gx_0}$ wrt $h_{x_0}$, where $\rho(g)^*$ is the adjoint of $\rho(g)$ wrt $h_{x_0}$.  Thus $\tr(\rho(g)^*\rho(g))\asymp \lambda(gx_0,x_0)$. As $\tr(\rho(g)^*\rho(g))$ is the sum of the squares of the entries of $\rho(g)$, part (2) follows from part (1).

% Provided we can choose $A$ so that the weight vectors of $\rho_\C$ are orthogonal with respect to $h_0$, $\tr(\rho(g)^*\rho(g))$ is a sum of monomials in the $e^{t_i}$, where $\rho(g)^*$ is the adjoint of $\rho(g)$ with respect to $h_0$.  Subject to this assumption, then, the two claims follow.
%
%To prove that such an $A$ exists, first note that over $\R$ the Hodge structure at $x_0$ splits as an orthogonal sum of simple Hodge structures with Hodge numbers $(1,0,\ldots,0,1)$.  In the odd weight case, it is easy to verify that such a torus exists for each factor, and the product torus will then suffice.  In the even weight case,
  \end{proof}

We define a proximity function of the boundary by the minimal period length:
\[\mu(x)=\min_{v\in H_\Z\backslash\{0\}} h_x(v).\]
For any $v\in H_\C$ we have $\log\frac{h_{x_0}(v)}{h_x(v)}\ll d_0(x)+O(1) $ by part (2) of Lemma \ref{distanceheight}, and so we deduce the following:
\begin{cor}\label{bound1} $-\log\mu \ll d_0+O(1)$.
\end{cor}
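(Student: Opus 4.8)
\textbf{Proof plan for Corollary \ref{bound1}.}

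The plan is to unwind the definition of $\mu$ and use the fact that, up to the scaling implicit in $\preceq$, the Hodge norm $h_x(v)$ of any fixed vector cannot shrink faster than exponentially in the distance $d_0(x)$, combined with the existence of a single integral vector of bounded Hodge norm at the basepoint. Concretely, first I would apply part (2) of Lemma \ref{distanceheight} in the form $\lambda(x_0,x)\asymp e^{d(x,x_0)}=e^{d_0(x)}$, where $\lambda(x_0,x)$ is the maximal eigenvalue of $h_{x_0}$ with respect to $h_x$. Since $\lambda(x_0,x)$ bounds the ratio $h_{x_0}(v)/h_x(v)$ from above for every $v\in H_\C$, we get
\[
h_x(v)\geq \lambda(x_0,x)^{-1}h_{x_0}(v)\gg e^{-d_0(x)+O(1)}\,h_{x_0}(v)
\]
(this is exactly the inequality $\log\frac{h_{x_0}(v)}{h_x(v)}\ll d_0(x)+O(1)$ already noted in the text, rewritten multiplicatively).

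Next I would fix a nonzero $v_1\in H_\Z$ once and for all — for instance a shortest vector of $H_\Z$ with respect to the fixed reference norm $h_{x_0}$ — so that $h_{x_0}(v_1)$ is a positive constant depending only on $D$ and the chosen basepoint. Plugging $v=v_1$ into the displayed inequality yields $h_x(v_1)\gg e^{-d_0(x)+O(1)}$. Since $\mu(x)=\min_{v\in H_\Z\setminus\{0\}}h_x(v)$ is a minimum over a set containing $v_1$, this does \emph{not} immediately bound $\mu$ from below — rather, one notes that for \emph{every} nonzero integral $v$ the same inequality gives $h_x(v)\gg e^{-d_0(x)+O(1)}h_{x_0}(v)\gg e^{-d_0(x)+O(1)}$, because $h_{x_0}(v)$ is bounded below by the (positive) minimum of $h_{x_0}$ over $H_\Z\setminus\{0\}$, a lattice minimum that is attained. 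Taking the minimum over $v\in H_\Z\setminus\{0\}$ of both sides then gives $\mu(x)\gg e^{-d_0(x)+O(1)}$, i.e. $-\log\mu(x)\ll d_0(x)+O(1)$, which is the claim.

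I do not expect any real obstacle here: the only subtlety is the logical direction of the bound — one must use the uniform lower estimate $h_x(v)\gg e^{-d_0(x)+O(1)}h_{x_0}(v)$ valid for all $v$ simultaneously, together with the fact that the lattice $H_\Z$ has a strictly positive minimal $h_{x_0}$-norm (Minkowski's theorem, or simply discreteness of the lattice), so that passing to the minimum over $v$ costs only an $O(1)$ additive term in the log. Everything else is the already-quoted consequence of Lemma \ref{distanceheight}(2), so the corollary follows with essentially no further work.
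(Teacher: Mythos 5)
Your argument is correct and follows essentially the same route as the paper's: both proofs rest on the inequality $\log\tfrac{h_{x_0}(v)}{h_x(v)}\ll d_0(x)+O(1)$ from Lemma \ref{distanceheight}(2), combined with the fact that $h_{x_0}$ has a strictly positive minimum over $H_\Z\setminus\{0\}$ so that passing to the lattice minimum costs only an $O(1)$ additive term after taking logs. The paper phrases it by selecting the $v$ achieving $\mu(x)$ and estimating directly, while you phrase it as a uniform lower bound on all $h_x(v)$ followed by taking the minimum; these are the same argument.
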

\begin{proof}  There is some $v\in H_\Z$ with $\log\mu=\log h_x(v)$ and thus
\[-\log\mu=-\log h_x(v)\ll\log \frac{h_{x_0}(v)}{h_x(v)}+O(1)\ll d_0(x) +O(1).\]
\end{proof}
When restricted to the fundamental set $\Phi$, we in fact have a comparison in the other direction:
\begin{lemma}\label{bound2}  For $x\in\Phi$ we have $d_0(x)\ll -\log\mu(x)+O(1)$.
\end{lemma}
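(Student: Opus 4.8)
The plan is to reduce everything to the local structure of the period map near the boundary divisor and feed in Schmid's norm estimates. Since $\mathcal{F}$ is a finite disjoint union of charts $\Delta^k\times\Sigma^\ell$, it is enough to bound $d_0$ on $\tilde\phi$ of a single chart, so I would write $x=\tilde\phi(w,z)$ with $w\in\Delta^k$ bounded away from $\partial\Delta^k$ and $z=(z_1,\dots,z_\ell)\in\Sigma^\ell$; then the only unbounded coordinates are the $\Imag z_j$. After reordering, let $N_1,\dots,N_m$ be the nonzero logarithms of local monodromy, arrange $\Imag z_1\ge\cdots\ge\Imag z_m$, and set $Y:=\Imag z_1$ (if all $N_j$ vanish the chart contributes a relatively compact part of $\Phi$ and there is nothing to prove). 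I will deduce the lemma from the two estimates $d_0(x)\ll\log Y+O(1)$ and $-\log\mu(x)\gg\log Y-O(1)$.

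For the first estimate I would use Lemma \ref{distanceheight} to write $e^{d_0(x)}\asymp\|\rho_\R(g_x)\|\asymp\sqrt{\lambda(x,x_0)}$ for $x=g_xx_0$, where $\lambda(x,x_0)=\sup_{0\ne v\in H_\C}h_x(v)/h_{x_0}(v)$, and then invoke Schmid's norm estimates — the $\SL_2$-orbit theorem and its several-variable refinement of Cattani--Kaplan--Schmid, applied uniformly in the bounded coordinates $w,\Real z_j$ of the chart — which bound every Hodge norm on $\Phi$ by $h_x(v)\ll Y^{C}h_{x_0}(v)$ with $C$ depending only on the weight $w$; hence $\lambda(x,x_0)\ll Y^{C}$ and $d_0(x)\ll\log Y+O(1)$.

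For the second estimate, since $\mu(x)\le h_x(v)$ for every nonzero $v\in H_\Z$, it suffices to exhibit one such $v$ with $h_x(v)\ll Y^{-1}$. Writing $N^{(p)}=N_1+\cdots+N_p$ and letting $W_\bullet(N^{(p)})$ be the rational monodromy weight filtration of $N^{(p)}$ normalized to be symmetric about $w$, Schmid's several-variable norm estimate gives, in the region $\Imag z_1\ge\cdots\ge\Imag z_m$,
\[\log h_x(v)=\sum_{p=1}^{m}k_p(v)\bigl(\log\Imag z_p-\log\Imag z_{p+1}\bigr)+O(1)\]
for $v$ homogeneous with respect to a common splitting of the $W_\bullet(N^{(p)})$, where $\Imag z_{m+1}$ is a fixed positive lower bound and $k_p(v)\in[-w,w]$ is the $W_\bullet(N^{(p)})$-weight of $v$ less $w$. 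Thus any $v\in\bigcap_{p=1}^m W_{w-1}(N^{(p)})$ has all $k_p\le-1$ on each homogeneous component, and summing (using asymptotic orthogonality of the splitting and telescoping) yields $\log h_x(v)\le-\sum_p(\log\Imag z_p-\log\Imag z_{p+1})+O(1)=-\log Y+O(1)$. To see this rational subspace is nonzero I would pass to the $\SL_2^m$-splitting furnished by Cattani--Kaplan--Schmid and take $v$ to be a lowest-weight vector of an irreducible summand on which the first $\SL_2$-factor acts nontrivially: its $W_\bullet(N^{(p)})$-weight is then $\le-1$ for every $p$, so $v\in\bigcap_pW_{w-1}(N^{(p)})$, and since this intersection is defined over $\Q$ it contains a nonzero vector of $H_\Z$. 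Carrying this out for each of the finitely many orderings, and then for each chart, gives $-\log\mu(x)\gg\log Y-O(1)$ on all of $\Phi$.

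I expect the main obstacle to be making Schmid's asymptotics genuinely uniform over the bounded directions of each chart, so that the constants in $h_x(v)\ll Y^{C}h_{x_0}(v)$ and in the several-variable estimate do not degenerate, together with the nonvanishing of $\bigcap_p W_{w-1}(N^{(p)})$ extracted from the $\SL_2^m$-structure; once these are in hand the two displayed estimates combine to give the lemma, and together with Corollary \ref{bound1} they show $d_0\asymp-\log\mu$ on $\Phi$.
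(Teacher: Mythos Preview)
Your argument is correct and rests on the same input as the paper's---the Cattani--Kaplan--Schmid norm asymptotics on each sector $\Imag z_{\pi(1)}\gg\cdots\gg\Imag z_{\pi(\ell)}$ of a chart---but it is organized differently. You pass through the intermediary $Y=\max_j\Imag z_j$, proving separately $d_0(x)\ll\log Y$ and $-\log\mu(x)\gg\log Y$, the latter via a single small integral vector manufactured from the $\SL_2^m$-structure (a lowest-weight vector in $\bigcap_p W_{w-1}(N^{(p)})$). The paper instead chooses an entire integral basis $v_i$ adapted to all the weight gradings, observes that the set of multi-weights is closed under negation (by self-duality of the monodromy weight filtrations under the polarization), and concludes in one stroke that $\max_i h_x(v_i)\sim(\min_i h_x(v_i))^{-1}$; combined with Lemma~\ref{distanceheight} this gives $d_0\ll\log\max_i h_x(v_i)\ll-\log\min_i h_x(v_i)\le-\log\mu$ directly, without ever isolating $Y$. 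Your route has the virtue of making explicit which integral vector becomes short; the paper's route is shorter and avoids the representation-theoretic digression, since the negation symmetry already encodes the existence of such a vector. The uniformity concern you flag is handled identically in both proofs, by shrinking the chart so that the CKS asymptotics hold with uniform constants on each sector $S_\pi$.
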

\begin{proof}We may assume $\mathcal{F}$ is a single $\Delta^k\times\Sigma^\ell$.  After choosing logarithms  $N_1,\dots,N_\ell$ of the local monodromy operators of the variation over $\Delta^k\times (\Delta^*)^\ell$, let $v_i$ be a fixed basis of $H_\Z$ descending to a basis of the multi-graded module associated to the $\ell$ weight filtrations, where we take each grading centered at $0$.  Let $w_i^{(j)}$ for $j=1,\ldots,\ell$ be the weights of $v_i$ w.r.t. $N_j$.  By \cite{hodgeasymp}, for every permutation $\pi$ and on each region $S_{\pi}\subset\Delta^k\times \HH^\ell$ of the form $\Imag z_{\pi(1)}\gg \cdots \Imag z_{\pi(\ell)}\gg 1$ we have 
\[h_{\tilde\phi(z)}(v_i)\sim \left(\frac{\Imag z_{\pi(1)}}{\Imag z_{2}}\right)^{w_i^{(1)}}\cdots\hspace{1em}\left(\frac{\Imag z_{\pi(\ell-1)}}{\Imag z_{\pi(\ell)}}\right)^{w_i^{(\ell-1)}}\cdot\hspace{1em}(\Imag z_{\pi(\ell)})^{w_i^{(\ell)}}.\]
where ``$\sim$'' means ``within a bounded function of."  As the set of weights is preserved under negation, it follows that $\max_i h_{\tilde\phi(z)}(v_i)\sim (\min_i h_{\tilde{\phi}(z)}(v_i))^{-1}$, and so by Lemma \ref{distanceheight},
\[d_0(\tilde{\phi}(z))\ll \max_i\log h_{\tilde{\phi}(z)}(v_i)\ll -\log\mu(\tilde{\phi}(z))+O(1)\] uniformly on every such region. The $S_{\pi}$ can be made to cover the region $\Delta^k\times\Sigma^\ell$ after shrinking $\Sigma$, and the result follows.
\end{proof}
\begin{proof}[Proof of Theorem \ref{smallheight}] Suppose $x\in B_0(R)\cap \gamma^{-1}\Phi$ for $\gamma\in \G(\Z)$.  Putting together Lemma \ref{bound2} and Corollary \ref{bound1} we have
\[d_0(\gamma x)\ll -\log\mu(\gamma x)+O(1)=-\log\mu(x)+O(1)\ll d_0(x)+O(1)\]
 and since
 \[d_0(\gamma x_0)\leq d(\gamma x,\gamma x_0 )+d(\gamma x,x_0)\leq d_0(x)+d_0(\gamma x)\]
 we are finished by part (1) of Lemma \ref{distanceheight}.
\end{proof}

\section{ The proof of  Theorem \ref{main}}\label{proof}
The remainder of the proof follows the same general strategy as \cite{MPT}.  There are sufficiently many differences, however, that we include the necessary modifications.
 
Recall that $ D $ sits naturally as an open subset in its compact dual $\check{ D }$ which has the structure of a projective variety. Let $M$ be the Hilbert 
scheme of all subvarieties of $X\times\check{ D }$ with the same Hilbert polynomial as $V$.
 Moreover let $\mathcal{V}\ra M$ be the universal family 
over $M$, with a natural embedding $\mathcal{V}\hookrightarrow(X\times\check{ D })\times M$.  

Let $\mathcal{V}_W$ be the base-change to $W\times M$.  The action of $\Gamma$ on $X\times D $ lifts to $\mathcal{V}_W$, and we define $\mathcal{V}_X:=\Gamma\backslash \mathcal{V}_{ W}$, which is naturally an analytic variety.  Note that as $M$ is proper, $\mathcal{V}_W$ is proper over $W$, and likewise $\mathcal{V}_X$ is proper over $X$. 

We endow $\mathcal{V}_X$ with a definable structure as follows.  $\mathcal{V}$ is algebraic and has an induced definable structure.  By Lemma \ref{definablelift}, pulling back to $\FF\times M$ and quotienting out by the definable equivalence relation $\FF\ra X$ we obtain the desired definable structure on $\mathcal{V}_X$.  

%\begin{itemize}
%
%\item Let $L$ be the integral local system on $X$, and  $F^\cdot\subset H\otimes\Oo_X$ be the sheaves defining the Hodge filtration. The $F^{\cdot}$ are actually definable as subsheaves of 
%$H\otimes\Oo_X$  - which is to say, they have definable frames - as follows from Lemma \ref{definablelift}. 
%
%\item 
%
%\end{itemize}
%
%\begin{lemma}\label{definablehodge}
%
%$X^o_1$ is a definable subset of $G_{\dim V}(J_r(X\times X))$.
%\end{lemma}
%
%\begin{proof}
%
%By lemma \ref{definablelift} we have that the map from $\FF$ to $X$ is a definable surjection. It follows that the map from
% $G_{\dim W} (J_r(\Phi\times X))$ to $G_{\dim W}(J_r(\Gamma\backslash D \times X))$ is definable. Now $X^0_1$ is the image of the pull-back of $ D ^o_1$ to $\FF\times X$, which is a definable set. The claim follows. 
% 
% \end{proof}

%Thus, by the definable Chow theorem (Peterzil-Starchenko \cite{PS}), $X_1$ is naturally an algebraic variety.
% 
%We let $\FF$ be a definable (semialgebraic) fundamental domain for $\pi$, and $\FF_1$ be the pullback of $\FF$ to $ D _1$, so that $\FF_1$ is a fundamental domain for $\pi_1$.
%Since definable maps induce definable maps on Grassmanian  bundles and jet spaces, we see that the natural projection map $\pi_1: D _1\ra X_1$ is definable when restricted to 
%$\FF_1$.

Suppose the theorem is false for the sake of contradiction. Moreover, suppose that $\dim X$ is minimal, and subject to that assumption, $\codim V+\codim W-\codim U$ is as large as possible, and subject to that assumption, that $\dim U$ is maximal.

Define a closed analytic subvariety $T\subset \mathcal{V}_W$ consisting of all pairs $(p,V')$ such that $V'\cap W$ has dimension at least $\dim U$ around $p$, and let $T_0$ be the irreducible component containing $(p,V)$ for some (hence any) point $p\in U$.  Let $Y:=\Gamma\backslash T_0\subset \mathcal{V}_X$, which is a closed definable analytic subvariety.  Now, the projection $q:Y\to X$ is defineable and proper, so the image $Z$ is a closed complex analytic defineable subvariety of $X$ by Remmert's theorem, and therefore it is also algebraic by definable Chow \cite{definechow} (see also \cite{MPT}). Moreover, it contains $\pr_X(U)$, and thus
it contains the smallest algebraic variety containing $\pr_X(U)$, so we may assume $Z=X$.

Consider the family $\mathscr{F}$ of algebraic varieties parametrized by $T_0$.  Let $\Gamma_\mathscr{F}\subset\Gamma$ 
be the subgroup of elements $\gamma$ such that a very general\footnote{Recall that very general means in the  complement of countably 
many proper closed subvarieties.} fiber of $\mathscr{F}$ is stable under $\gamma$. The stabilizer of a very general fiber of $\mathscr{F}$ in $\Gamma$ is then exactly $\Gamma_\mathscr{F}$.  Let $\bTheta$ be the identity component of the 
$\Q$-Zariski closure of $\Gamma_\mathscr{F}$ in $\G$. 

\begin{lemma}

$\bTheta$ is a normal subgroup of $\G$. 

\end{lemma}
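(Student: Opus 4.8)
The plan is to deduce this from the facts that $\Gamma$ is Zariski dense in $\G$ (by construction of $\G$) and that the normalizer $N_\G(\bTheta)$ is a $\Q$-algebraic subgroup of $\G$: it therefore suffices to show $\gamma\bTheta\gamma^{-1}=\bTheta$ for every $\gamma\in\Gamma$, as this forces $N_\G(\bTheta)=\G$ and hence $\bTheta\triangleleft\G$. Since conjugation by $\gamma$ is an algebraic automorphism of $\G$ and $\bTheta$ is by definition the identity component of the $\Q$-Zariski closure of $\Gamma_\mathscr{F}$, the group $\gamma\bTheta\gamma^{-1}$ is the identity component of the Zariski closure of $\gamma\Gamma_\mathscr{F}\gamma^{-1}$. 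Hence the whole statement reduces to showing that $\Gamma_\mathscr{F}$ and $\gamma\Gamma_\mathscr{F}\gamma^{-1}$ are commensurable for every $\gamma\in\Gamma$.

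The next step is to make $\Gamma_\mathscr{F}$ and its conjugates concrete. For each $\delta\in\Gamma$ the locus $\{t\in T_0:\delta\mathscr{F}_t=\mathscr{F}_t\}$ is Zariski closed in the irreducible variety $T_0$, so (as $\C$ is uncountable) it is either all of $T_0$ or a proper subvariety; thus $\Gamma_\mathscr{F}=\bigcap_{t\in T_0}\Stab_\Gamma(\mathscr{F}_t)$, and, discarding the countably many proper subvarieties coming from $\delta\notin\Gamma_\mathscr{F}$, one gets $\Stab_\Gamma(\mathscr{F}_t)=\Gamma_\mathscr{F}$ for very general $t\in T_0$. Because the action of $\Gamma$ on $\mathcal{V}_W$ carries $\mathscr{F}_t$ to $\mathscr{F}_{\gamma t}$ with $\gamma t$ ranging over the component $\gamma T_0$ of the locus $T$, we likewise get $\gamma\Gamma_\mathscr{F}\gamma^{-1}=\bigcap_{s\in\gamma T_0}\Stab_\Gamma(\mathscr{F}_s)$, the generic stabilizer of the translated family $\gamma\cdot\mathscr{F}$ parametrized by $\gamma T_0$; note that $T_0$ and $\gamma T_0$ have the same image $Y$ in $\mathcal{V}_X=\Gamma\backslash\mathcal{V}_W$, so both families descend to the same family over $Y$.

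The remaining, essential comparison of these two generic stabilizers is where the reduction $Z=X$ — i.e.\ that $q\colon Y\to X$ is surjective — is used. Since the projections $\pr_X(\mathscr{F}_t)$ ($t\in T_0$) and $\pr_X(\mathscr{F}_s)$ ($s\in\gamma T_0$) each sweep out $X$, a very general point $x\in X$ lies on a very general fiber of $\mathscr{F}$ and on a very general fiber of $\gamma\cdot\mathscr{F}$; choosing lifts of $x$ on such fibers in $W$, these lifts are $\Gamma$-equivalent, so after replacing $s$ by a $\Gamma$-translate we may assume the two fibers meet at a common very general point of $W$. One then argues, exactly as in the corresponding step of \cite{MPT} and using the maximality of $\codim V+\codim W-\codim U$ and then of $\dim U$ among minimal-dimensional counterexamples, that two fibers of the two families through such a generic point have the same stabilizer in $\Gamma$; equivalently $\gamma\Gamma_\mathscr{F}\gamma^{-1}$ is $\Gamma$-conjugate to $\Gamma_\mathscr{F}$. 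As these subgroups then form a single $\Gamma$-conjugacy class containing $\Gamma_\mathscr{F}$ and their Zariski closures have the same dimension, they are commensurable with $\Gamma_\mathscr{F}$, which completes the argument. I expect this last propagation step — transporting the identification of fibers, and hence of stabilizers, from the given component to a very general point of $W$ — to be the main obstacle, since it is precisely here that the extremality of the chosen counterexample, rather than formal group theory, has to be exploited.
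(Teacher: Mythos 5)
The reduction you set up is fine (the normalizer of $\bTheta$ is $\Q$-algebraic, so it suffices to conjugate by a Zariski-dense subgroup), and the reformulation of $\Gamma_\mathscr{F}$ as the generic fiberwise stabilizer and of $\gamma\Gamma_\mathscr{F}\gamma^{-1}$ as the generic stabilizer of the translated family over $\gamma T_0$ is also correct. But the final, load-bearing step — that the generic stabilizers of the two families over $T_0$ and over $\gamma T_0$ agree, for an \emph{arbitrary} $\gamma\in\Gamma$ — is not actually proved; it is deferred to a ``propagation'' argument through a common very general point of $W$. That step has no clear mechanism: two fibers of two different families passing through the same point of $W$ have no a priori reason to share a stabilizer, and neither the maximality of $\codim V+\codim W-\codim U$ nor of $\dim U$ is what makes such a comparison work (those maximality hypotheses are used in the later induction, not here). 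As written, this is a genuine gap.

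The paper sidesteps the issue by not conjugating by all of $\Gamma$. The key observation you are missing is that there is a distinguished subgroup $\Gamma_Y\subset\Gamma$, namely the image of $\pi_1(Y)\to\pi_1(X)\to\G(\Z)$, which \emph{automatically preserves} the component $T_0$ (since $T_0$ is the relevant connected component of the preimage of $Y$ under $\mathcal{V}_W\to\mathcal{V}_X$). For $\gamma\in\Gamma_Y$, the family $\mathscr{F}$ is carried to itself, so $\gamma$ sends very general fibers to very general fibers and $\gamma\Gamma_\mathscr{F}\gamma^{-1}=\Gamma_\mathscr{F}$ on the nose — no comparison of distinct families is needed. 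The role of the reduction $Z=X$ (i.e.\ $q:Y\to X$ proper surjective) is precisely to guarantee that $\Gamma_Y$ is finite index in $\Gamma$, hence Zariski dense in $\G$ (by Andr\'e--Deligne), which is all that is needed to pass from ``normalized by $\Gamma_Y$'' to ``normal in $\G$''. So the fix to your argument is: restrict from $\Gamma$ to $\Gamma_Y$ from the outset, establish $\Gamma_Y$ is finite index using $Z=X$, and drop the unprovable fiber-comparison step entirely.
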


\begin{proof}

Let $W'$ be a connected component of $W$ which intersects $X\times\Phi$.  Note that $W'$ is stable under the monodromy group $\Gamma$ of $X$.  Clearly $\mathscr{F}$ is stable under the image $\Gamma_Y$ of $\pi_1(Y)\to\pi_1(X)\to \G(\Z)$ which is finite index in $\Gamma$, and therefore $\Gamma_Y$ is Zariski-dense in $\G$ by Andre-Deligne.

Each element of $\Gamma_Y$ sends a very general fiber of $\mathscr{F}$ to a very general fiber, so by the above remark $\Gamma_\mathscr{F}=\gamma\cdot \Gamma_\mathscr{F}\cdot\gamma^{-1}$ for all $\gamma\in\Gamma_Y$.  It follows that $\bTheta$ is invariant under conjugation by $\Gamma_Y$ and hence by the Zariski closure of $\Gamma_Y$ as well, which is all of $\G$. 

\end{proof}

\begin{prop} $\bTheta$ is the identity subgroup.

\end{prop}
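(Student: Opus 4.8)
The plan is to prove this by reducing to the case where it is trivially true: I will show that if $\bTheta$ were nontrivial, the whole minimal counterexample could be replaced by another one with the \emph{same} $\dim X$, the \emph{same} codimension defect $\codim V+\codim W-\codim U$, and the \emph{same} $\dim U$, but living over a strictly smaller period domain, and in which the analogue of $\bTheta$ is trivial; since the reduction strictly decreases $\dim D$ it must terminate, so we may as well assume $\bTheta=\{1\}$. The structural input is that $\mathscr{H}_\Z$ is a polarized integral variation, so (after the finite cover already taken) the connected algebraic monodromy group $\G$ is semisimple and is normal in the generic Mumford--Tate group; hence the normal subgroup $\bTheta$ is a product of almost-simple factors of $\G$ and has an almost-complement $\G_1\trianglelefteq\G$ with $\G=\bTheta\cdot\G_1$ an almost-direct product. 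Correspondingly the weak Mumford--Tate domain and its compact dual split as products $D=D(\bTheta)\times D(\G_1)$ and $\check D=\check D(\bTheta)\times\check D(\G_1)$, with $\bTheta$ acting on $\check D$ through the first factor only.

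First I would identify the fibers of the family $\mathscr{F}$. A very general fiber $\mathscr{F}_t$ is an algebraic subvariety of $X\times\check D$ whose stabilizer in $\Gamma$ is exactly $\Gamma_\mathscr{F}$, so its stabilizer in $\G_\C$ is an algebraic subgroup containing $\Gamma_\mathscr{F}$, hence containing the identity component $\bTheta$ of the Zariski closure of $\Gamma_\mathscr{F}$. Thus $\bTheta$ stabilizes the very general fiber, and since $T_0$ is irreducible while the locus in $T_0$ where a fixed algebraic group stabilizes the fiber of a flat family is closed, $\bTheta$ stabilizes \emph{every} fiber, in particular $V$. Now $\bTheta(\C)$ acts transitively on the flag variety $\check D(\bTheta)$ and trivially on $X$ and on $\check D(\G_1)$, so for each $(x,y_1)$ the slice $\{y_0\in\check D(\bTheta):(x,y_0,y_1)\in V\}$ is $\bTheta(\C)$-invariant, hence empty or all of $\check D(\bTheta)$; therefore $V=V_1\times\check D(\bTheta)$ for an algebraic $V_1\subset X\times\check D(\G_1)$ (algebraicity of $V_1$ by properness of $\check D(\bTheta)$).

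Next I would carry out the reduction. Passing to the quotient variation of Hodge structures associated to $\G_1$, with period domain $D(\G_1)$, monodromy group $\Gamma_1$, period map $\phi_1:X\to\Gamma_1\backslash D(\G_1)$ obtained by composing $\phi$ with the projection induced by $D\to D(\G_1)$, and fiber product $W_1=X\times_{\Gamma_1\backslash D(\G_1)}D(\G_1)$, one checks that $W\to X$ factors through a finite covering $W\to W_1$ over $X$ (after a finite \'etale base change splitting $\Gamma$ up to finite index), so $V\cap W=(V_1\cap W_1)\times_X(\text{a covering of }X)$. Hence there is an irreducible analytic component $U_1$ of $V_1\cap W_1$ with $\dim U_1=\dim U$ and $\pr_X(U_1)=\pr_X(U)$. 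A direct count gives $\codim_{X\times\check D(\G_1)}V_1=\codim_{X\times\check D}V$, while $\codim W_1$ and $\codim U_1$ are each smaller than $\codim W$ and $\codim U$ by $\dim\check D(\bTheta)$ (the ambient dimension dropped by that amount, with $\dim W,\dim U$ unchanged), so the codimension defect is preserved and $U_1$ still violates the codimension inequality. Since a normal subgroup of the direct factor $\G_1$ is normal in $\G$, a proper weak Mumford--Tate subvariety of $X$ for the new variation is one for $\mathscr{H}_\Z$ containing $\pr_X(U_1)=\pr_X(U)$; so this is again a counterexample, with the same $\dim X$, defect, and $\dim U$, over a strictly smaller period domain, and with its own version of $\bTheta$ trivial (its $\Gamma_\mathscr{F}$ is the image of $\Gamma_\mathscr{F}\subset\bTheta(\Q)$ in $\G/\bTheta$, hence finite, hence — after passing to a torsion-free cover — trivial). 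Iterating, we conclude $\bTheta=\{1\}$.

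The main obstacle I expect is the Hodge-theoretic structure theory underlying the product decomposition: verifying carefully that $\G$ is semisimple and normal in the generic Mumford--Tate group, that the weak Mumford--Tate domain $D(\G)$ and its compact dual split as products compatibly with the $\bTheta$-action on $\check D$, and that the ``$\G_1$-part'' of $\mathscr{H}_\Z$ is genuinely a polarized integral variation whose generic Mumford--Tate group is $\G/\bTheta$ and whose weak Mumford--Tate subvarieties are weak Mumford--Tate subvarieties of $X$ in the original sense. One must also handle with care the finite-index and finite-cover bookkeeping — the splitting $\Gamma\hookrightarrow\bTheta(\Z)\times\G_1(\Z)$ holds only up to finite index, and the standing hypotheses (torsion-free monodromy, unipotent local monodromy, properness of the period map) may need to be re-established after a further finite cover — but none of this affects the dimension counts above.
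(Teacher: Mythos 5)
Your proposal follows essentially the same route as the paper's proof: you split $\G$ into an almost-direct product $\bTheta\cdot\G_1$, deduce a compatible splitting of the weak Mumford--Tate domain and of the period map, show that $\bTheta$-invariance of $V$ forces $V=V_1\times\check D(\bTheta)$, and then run a codimension count in the $\G_1$-picture. Where the two diverge is in the final logical step. The paper derives a direct contradiction: treating the theorem as available for the smaller domain (an implicit induction on $\dim D$), the fact that $\pr_X(U_1)$ is not contained in a proper weak Mumford--Tate subvariety forces the defect $\codim V_1+\codim W_1-\codim U_1$ to vanish, while the identities $\dim U_1=\dim U$, $\codim_{X\times\check D(\G_1)}V_1=\codim_{X\times\check D}V$, and $\codim W-\codim W_1=\codim U-\codim U_1=\dim\check D(\bTheta)$ show the defect is unchanged and hence positive; this contradiction proves $\bTheta$ is trivial for the original minimal counterexample. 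You instead phrase it as a replacement/iteration: the new tuple is another counterexample with the same invariants over a strictly smaller domain, and one iterates until the analogue of $\bTheta$ is trivial. This is logically equivalent in context (both rest on the same hidden descent on $\dim D$), but it proves the weaker ``WLOG $\bTheta=\{1\}$'' rather than the stated assertion about the given minimal counterexample; the paper's formulation as an outright contradiction is tighter.

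One step in your write-up is not justified and should be removed: the parenthetical claim that the new $\Gamma_{\mathscr{F}'}$ is the image of $\Gamma_\mathscr{F}\subset\bTheta(\Q)$ in $\G/\bTheta$. The family $\mathscr{F}'$ for the reduced counterexample is built afresh from a new Hilbert-scheme locus $T_0'$ attached to $(X,V_1,W_1,U_1)$, and there is no a priori identification of $\mathscr{F}'$ with a quotient of $\mathscr{F}$; the stabilizer of a very general fiber of $\mathscr{F}'$ in $\Gamma_1$ could be strictly larger than the image of $\Gamma_\mathscr{F}$. Fortunately your iteration does not actually need this claim (the decrease of $\dim D$ is enough to terminate), and the paper's direct-contradiction version sidesteps it entirely. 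Your more detailed justification that $\bTheta$ stabilizes every fiber of $\mathscr{F}$ (closedness of the stabilizer locus plus irreducibility of $T_0$), rather than only a very general one, is a nice clarification of the step the paper compresses into ``WLOG $V$ is a very general fiber.''
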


\begin{proof}

Without loss of generality $V$ is a very general fiber of $F$, and hence is invariant by exactly $\bTheta$. Since $\bTheta$ is a $\Q$-group by construction, it follows that $\G$ is isogenous to 
$\bTheta_1\times\bTheta_2$ with $\bTheta_2=\bTheta$ and we have a splitting of weak Mumford--Tate domains $D=D_1\times D_2$ with $D_i=D(\bTheta_i)$.  Replacing $X$ by a finite cover we also have a splitting of the period map \cite[Theorem III.A.1]{GGK}
\[\phi = \phi_1\times\phi_2:X\to \Gamma_1\backslash D_1\times \Gamma_2\backslash D_2.\] Moreover, $\phi_1,\phi_2$ satisfy Griffiths transversality (see the proof of \cite[Theorem III.A.1]{GGK}).
Note that $V\subset X\times D$ by assumption, and as $V$ is invariant under $\bTheta_2$ it is of the form $V_1\times D_2$ where $V_1\subset X\times D_1$.

Consider the period map $X\to \Gamma_1\backslash D_1$, the resulting $W_1\subset X\times D_1$, and the subvariety $V_1\subset X\times D_1$.  Let $U_1$ be the component of $V_1\cap W_1$ onto which $U$ projects.  By assumption the theorem applies in this situation, and as $U_1$ cannot be contained in a proper weak Mumford--Tate subdomain (for then $U$ would as well), we must have
\[\codim_{X\times D_1} (U_1)=\codim_{X\times D_1} (V_1)+\codim_{X\times D_1} (W_1).\] 
Note that the projection $W\to W_1$ has discrete fibers, so $\dim W=\dim W_1$ and $\dim U=\dim U_1$, whereas $\codim V_1=\codim V$, which is a contradiction if $\phi_2$ is non-constant.

\end{proof}

It follows that $V$ is not invariant by any infinite subgroup of $\Gamma$.  The proof of Theorem \ref{main} is then completed by the following lemma, which produces a contradiction:

\begin{lemma}

$V$ is invariant by an infinite subgroup of $\Gamma$.

\end{lemma}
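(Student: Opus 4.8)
The strategy is the familiar counting-theorem endgame: assemble the pieces established in the preceding sections into an application of the Pila--Wilkie counting theorem, producing infinitely many integral points in a definable family, which then forces an algebraic (group-theoretic) constraint on $V$ contradicting the preceding proposition. First I would set up the relevant definable set. Recall the diagram \eqref{diagram}: $\FF$ is definable, $\tilde\phi$ is definable by Lemma \ref{definablelift}, and $\mathcal{V}_W$ (hence the subvariety $T_0$ and its descent $Y\subset\mathcal{V}_X$) carries a definable structure. Consider the set $\Sigma$ of pairs $(\gamma, V')\in \G(\Z)\times M$ such that $\gamma\Phi$ meets the fiber $\mathscr{F}_{V'}$ along a component of dimension $\geq \dim U$ near some point lying over $X$; equivalently, encode $\gamma$ by its matrix entries under $\rho_\Z$ and $V'$ by its Hilbert point, and cut out the condition ``$\gamma^{-1}\cdot(\text{a large-dimensional piece of }\mathscr{F}_{V'}\cap W)$ meets $\Phi$'' — this is a definable condition in $\R_{\mathrm{an,exp}}$ because $\Phi=\tilde\phi(\FF)$ is definable, the $\G(\R)$-action is definable, and the universal family over $M$ is algebraic.

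\textbf{The counting step.} By Theorem \ref{smallheight}, every $\gamma\in\G(\Z)$ that contributes to $\Sigma$ (i.e.\ with $B_0(R)\cap\gamma^{-1}\Phi\neq\varnothing$, which is the relevant range once we know $U$ — and hence the point of $X\times\check D$ over which we work — lies in a bounded region of $X\times\check D$, so its preimages under $\exp\times\mathrm{id}$ that meet a fixed compact set of $X$ have lifts in $\gamma^{-1}\Phi$ of bounded $\tilde\phi$-distance, say $\leq R$, from $x_0$) has polynomially bounded height $H(\gamma)=e^{O(R)}$. On the other hand, the volume bound of Theorem \ref{volumebound} together with Proposition \ref{smallvolume} gives the lower bound on the number of such $\gamma$: since $\mathrm{pr}_X(U)$ generates $X$ (we arranged $Z=X$), the $\Gamma$-translates of $\Phi$ that meet the relevant analytic piece $\tilde\phi^{-1}(\text{lift of }U)$ inside $B_{x_0}(R)$ number at least $\gg e^{\beta R}/O(1)$ — the numerator is the volume of the piece of $U$ (or rather of its lift, which is Griffiths transverse since $\tilde\phi$ is horizontal), the denominator bounds the volume contributed by a single translate of $\Phi$. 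Hence the fiber $\Sigma_{V}$ over the Hilbert point of $V$ contains $\gg e^{\beta R}$ integral points of height $\leq e^{cR}$ for all large $R$; in particular it contains far more than $N^{\epsilon}$ points of height $\leq N$ for suitable $\epsilon$ and arbitrarily large $N$. By the Pila--Wilkie counting theorem, these integral points are not contained in the algebraic part $\Sigma_V^{\mathrm{alg}}$; some positive-dimensional semialgebraic — hence, after a standard argument, a positive-dimensional algebraic — subset $B$ of $\Sigma_V$ consists entirely of points $\gamma$ for which $\gamma\Phi$ meets the big-dimensional locus of $\mathscr{F}_V\cap W$.

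\textbf{From a positive-dimensional algebraic block to an infinite group.} The remaining point is that $B$, being a positive-dimensional algebraic set of matrices in $\G$ all of whose $\Z$-points preserve (a component through $U$ of the intersection with $W$ of) the fixed subvariety $V$, is contained in a coset of a positive-dimensional algebraic subgroup $\HH\subset\G$ stabilizing $V$. Concretely: if $\gamma_1,\gamma_2\in B(\Z)$ both send the chosen analytic branch of $\mathscr{F}_V\cap W$ near $U$ into $\Phi$, then $\gamma_1^{-1}\gamma_2$ sends this branch to (a $\Gamma$-translate of) itself, and by analytic continuation and the maximality/minimality assumptions on $U$ (which guarantee $V$ itself is characterized, up to the relevant data, by this branch), $\gamma_1^{-1}\gamma_2$ stabilizes $V$; the Zariski closure of the group generated by all such $\gamma_1^{-1}\gamma_2$ is then a positive-dimensional $\Q$-algebraic subgroup of $\G$ stabilizing $V$, and it meets $\Gamma$ in an infinite subgroup since $\Gamma$ is a lattice. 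This exhibits $V$ as invariant under an infinite subgroup of $\Gamma$, as claimed.

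\textbf{Main obstacle.} The genuinely delicate point is not the counting input — that is a black box once the volume and height bounds of the previous sections are in hand — but rather the passage from ``a positive-dimensional semialgebraic family of $\gamma$ each approximately stabilizing a branch of $V\cap W$'' to ``an honest positive-dimensional subgroup stabilizing $V$ as an algebraic variety.'' This requires carefully propagating the stabilization statement from the analytic germ near a point of $U$ to all of $V$, using that the irreducible components involved are determined by the minimality of $\dim X$ and the extremality of $\codim V+\codim W-\codim U$, and using the properness/definability of $\mathcal{V}_X\to X$ to ensure the monodromy action on the Hilbert scheme component is algebraic; this is the step that follows \cite{MPT} most closely but must be adapted to the Hodge-theoretic setting where $\Phi$ comes from Schmid's theory rather than a Siegel set.
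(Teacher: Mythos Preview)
Your counting setup is essentially correct and matches the paper: the definable set, the volume lower bound from Theorem~\ref{volumebound}, the upper bound per translate from Proposition~\ref{smallvolume}, and the height bound from Theorem~\ref{smallheight} combine exactly as you describe, and Pila--Wilkie then produces a real algebraic curve $C\subset I$ containing many integer points.

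The genuine gap is in your passage from $C$ to an infinite stabilizer. You assert that for $\gamma_1,\gamma_2\in B(\Z)$ the product $\gamma_1^{-1}\gamma_2$ stabilizes $V$, invoking ``analytic continuation and the maximality/minimality assumptions on $U$.'' But membership of $g$ in the definable set only says that $gV\cap W$ has a component of dimension $\dim U$ meeting $X\times\Phi$; it says nothing about which component, nor that $gV=V$. Different $\gamma$'s may pick out entirely different branches, and there is no mechanism by which the germ of $V\cap W$ near $U$ determines the algebraic variety $V$ --- indeed, the whole family $\{cV:c\in C\}$ typically consists of distinct varieties all satisfying the same definable condition. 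Your claim that the extremality hypotheses force $V$ to be characterized by this branch is simply false in general.

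The paper resolves this differently, and the induction hypothesis is essential. One considers the family $V_c:=cV$ for $c\in C$. If $V_c$ is constant in $c$, then $V$ is stabilized by $C\cdot C^{-1}$, which contains a non-identity integer point, and one is done. If $V_c$ genuinely varies, one does \emph{not} produce a stabilizer; instead one derives a contradiction with the induction: either $U\subset V_c$ for all $c$, in which case replacing $V$ by $V_c\cap V_{c'}$ for generic $c,c'$ lowers $\dim V$ while keeping $U$, contradicting maximality of $\codim V+\codim W-\codim U$; or $U\not\subset V_c$ for some $c$, in which case $V_c\cap W$ varies and the Zariski closure $V'$ of $C\cdot V$ has both $\dim V'=\dim V+1$ and $\dim(V'\cap W)=\dim U+1$, contradicting maximality of $\dim U$. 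You identified the right obstacle in your final paragraph but proposed the wrong fix: the extremality assumptions are used not to pin down $V$ from its germ, but to rule out the non-constant case altogether.
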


\begin{proof}

Consider the definable set 
$$I:=\{g\in\G(\R) \mid \dim \left(g\cdot V\cap W\cap(X\times\Phi)\right)=\dim U\}.$$ Clearly, $I$ contains $\gamma\in\Gamma$ 
whenever $U$ intersects $X\times \gamma^{-1}\Phi$.  We may assume $1\in I$, and take $x_0\in\Phi$ the second coordinate of a point of intersection of $U$ and $X\times\Phi$.

For any $R>0$, consider the ball $B_0(R)$ centered at $x_0$.  On the one hand, by Theorem \ref{volumebound} we have 
\[\vol\left(U\cap \left(X\times B_{x_0}(R)\right)\right)\gg e^{\beta R}.\]
$U$ is covered with bounded overlaps by $U\cap(X\times\gamma^{-1}\Phi)$ for $\gamma\in \G(\Z)$, so by Proposition \ref{smallvolume} it follows that $I$ has $e^{ \omega (R)}$ integer points.  On the other hand, by Theorem \ref{smallheight} each of these points has height $e^{O(R)}$, and it follows by the Pila-Wilkie theorem that $I$ contains a real algebraic curve $C$ containing arbitrarily
many integer points, in particular at least 2 integer points.  

If $V_c$ is constant in $c$, then $V$ is stable under $C\cdot C^{-1}$. Since $C$ contains at least 2 integer points, it follows that $V$ is stabilized by
a non-identity integer point, completing the proof (since $\Gamma$ is torsion free). So we assume that $V_c$ varies with $c\in C$. Note that since $C$ contains an integer point that 
$\tilde{\phi}(V_c\cap W)$ is not contained in a weak Mumford-Tate subdomain for at least one $c\in C$, and thus for all but a countable subset of $C$ (since there are only countably many families of weak Mumford--Tate subdomains).

We now have 2 cases to consider. First, suppose that $U\subset V_c$ for $c\in C$. 
Then we may replace $V$ by $V_c\cap V_{c'}$ for a generic $c,c'\in C$ and lower $\dim V$, contradicting our
induction hypothesis on $\dim V-\dim U$. 

On the other hand, if it is not true that $U\subset V_c$ for $c\in C$ then $V_c\cap W$ varies with $C$, and so we may set $V'$ to be the Zariski closure of $C\cdot V$. This increases the dimension of $V$ by $1$, but then $\dim V'\cap W = \dim U + 1$ as well, and thus we again contradict our induction hypothesis, this time on $\dim U$. This completes the proof.

 \end{proof}

 \bibliography{biblio.hodge.ax.schanuel}
\bibliographystyle{alpha}

\end{document}